\title{Random points on an algebraic manifold}
\author{Paul Breiding\footnote{Technische Universit\"at Berlin, breiding@math.tu-berlin.de, P.~Breiding has received funding from the European Research Council (ERC) under the European Union's Horizon 2020 research and innovation programme (grant agreement No 787840)} \and Orlando Marigliano\footnote{Max-Planck Institute for Mathematics in the Sciences, Leipzig, orlando.marigliano@mis.mpg.de}}
\date{}
\setlist[enumerate]{leftmargin=-.5in}
\setlist[itemize]{leftmargin=-.5in}
\newcommand{\HC}{\mathbb{C}}
\newcommand{\HP}{\mathbb{P}}
\newcommand{\HR}{\mathbb{R}}
\newcommand{\HS}{\mathbb{S}}
\newcommand{\cE}{\mathcal{E}}
\newcommand{\cH}{\mathcal{H}}
\newcommand{\cI}{\mathcal{I}}
\newcommand{\cK}{\mathcal{K}}
\newcommand{\cL}{\mathcal{L}}
\newcommand{\cM}{\mathcal{M}}
\newcommand{\cN}{\mathcal{N}}
\newcommand{\cS}{\mathcal{S}}
\newcommand{\cU}{\mathcal{U}}
\newcommand{\cV}{\mathcal{V}}
\newcommand{\cW}{\mathcal{W}}
\newcommand{\set}[1]{\left\{#1\right\}}
\newcommand{\cset}[2]{\left\{#1\mid #2\right\}}
\renewcommand{\d}{\mathrm{d}}
\DeclareMathOperator*{\Prob}{\mathrm{Prob}}
\DeclareMathOperator*{\mean}{\mathbb{E}}
\newcommand\restr[2]{\ensuremath{\left.#1\right|_{#2}}}
\newcommand{\deriv}[2]{\mathrm{D}_{#2}#1\,}
\newcommand{\Tang}[2]{\mathrm{T}_{#1} {#2}}
\numberwithin{equation}{section}
\numberwithin{figure}{section}
\theoremstyle{plain}
\newcounter{numbering} \numberwithin{numbering}{section}
\newtheorem{theorem}[numbering]{Theorem}
\newtheorem{lemma}[numbering]{Lemma}
\newtheorem{proposition}[numbering]{Proposition}
\newtheorem{corollary}[numbering]{Corollary}
\theoremstyle{definition}
\newtheorem{definition}[numbering]{Definition}
\theoremstyle{remark}
\newtheorem{remark}[numbering]{Remark}
\theoremstyle{plain}
\crefname{equation}{}{}
\crefname{equation}{}{}
\crefname{figure}{Figure}{Figures}
\crefname{section}{Section}{Sections}
\crefname{lemma}{Lemma}{Lemmata}
\crefname{prop}{Proposition}{Propositions}
\crefname{thm}{Theorem}{Theorems}
\crefname{cor}{Corollary}{Corollaries}
\crefname{dfn}{Definition}{Definitions}
\crefname{notation}{Notations}{Notations}
\crefname{rem}{Remark}{Remarks}
\crefname{claim}{Claim}{claims}
\crefname{equation}{}{}
\crefname{equation}{}{}
\crefname{figure}{Figure}{Figures}
\crefname{section}{Section}{Sections}
\crefname{lemma}{Lemma}{Lemmata}
\crefname{proposition}{Proposition}{Propositions}
\crefname{theorem}{Theorem}{Theorems}
\crefname{corollary}{Corollary}{Corollaries}
\crefname{definition}{Definition}{Definitions}
\crefname{notation}{Notations}{Notations}
\crefname{remark}{Remark}{Remarks}
\crefname{claim}{Claim}{claims}
\begin{document}

\maketitle

\noindent{\textbf{ Key words:}}
sampling, approximating integrals, geometric probability, algebraic geometry, statistical physics, topological data analysis

\begin{abstract}
Consider the set of solutions to a system of polynomial equations in many variables.
An algebraic manifold is an open submanifold of such a set.
We introduce a new method for computing integrals and sampling from distributions on algebraic manifolds.
This method is based on intersecting with random linear spaces. It produces i.i.d.\ samples,
works in the presence of multiple connected components, and is simple to implement.
We present applications to computational statistical physics and topological data analysis.
\end{abstract}

\section{Introduction}
In statistics and applied mathematics,
\emph{manifolds} are useful models for continuous data.  For example,
in \emph{computational statistical physics} the state space of a collection of particles is a manifold. Each point on this manifold records the positions of all particles in space.
The field of \emph{information geometry} interprets a statistical model as a manifold, each point corresponding to a probability distribution inside the model.
\emph{Topological data analysis} studies the geometric properties of a point cloud in some Euclidean space. Learning the manifold that best explains the position of the points is a research topic in this field.

Regardless of the context, there are two fundamental computational problems associated to a manifold $\mathcal M$.
\begin{enumerate}
\item Approximate the Lebesgue integral $\int_\mathcal M f(x)\d x$ of a given function $f$ on $\mathcal M$.
\item Sample from a probability distribution with a given density on $\mathcal M$.
\end{enumerate}
These problems are closely related in theory. In applications however, they may occur separately.
For instance, in Section~\ref{sec:experiments} we present an example from computational physics that involves only~(1). On the other hand, the subsequent example from topological data analysis is about~(2).

In general these problems are easy for manifolds which admit a differentiable surjection \mbox{$\mathbb R^k\to \mathcal M$}, also called \emph{parametrized} manifolds. They are harder for \emph{non-parametrized} manifolds, which are usually represented as the sets of nonsingular solutions to some system of differentiable \emph{implicit} equations
\begin{equation} \label{defining-equations}
	F_i(x) = 0 \quad (i=1,\dotsc, r).
\end{equation}

The standard techniques to solve (2) in the non-parametrized case involve moving randomly from one sample point on $\mathcal M$ to the next nearby, and fall under the umbrella term of \emph{Markov Chain Monte Carlo (MCMC)} \cite{BG2013, Brubaker2012, CG2011, GHZ2017, Kalos2008, Lelievre2012, LRS2018}.

In this paper, we present a new method that solves (1) and (2) when the functions $F_i$ are \emph{polynomial}.
In simple terms, the method can be described as follows. First, we choose a random linear subspace of complementary dimension and calculate its intersection with $\mathcal M$. Since the implicit equations are polynomial, the intersection can be efficiently determined using numerical polynomial equation solvers \cite{bertini, BT, HOM4PS, Leykin2018, PHCpack}.
The number of intersection points is finite and bounded by the \emph{degree} of the system~\cref{defining-equations}. Next, if we want to solve (1) we evaluate a modified function $\overline f$ at each intersection point, sum its values, and repeat the process to approximate the desired integral. Else if we want to solve (2), after a rejection step we pick one of the intersection points at random to be our sample point. We then repeat the process to obtain more samples of the desired density.

Compared to MCMC sampling, our method has two main advantages. First, we have the option to generate points that are independent of each other. Second, the method is global in the sense that it also works when the manifold has multiple distinct connected components, and does not require picking a starting point $x_0\in \mathcal M$.

The main theoretical result supporting the method is Theorem~\ref{theorem1}. It is in line with a series of classical results commonly known as \emph{Crofton's formulae} or \emph{kinematic formulae}~\cite{santalo} that relate the volume of a manifold to the expected number of its intersection points with random linear spaces. Previous uses of such formulae in applications can be found in~\cite{graph-cuts, crofton-discretized, discretized-improved,crofton-sphere}. Traditionally, one starts by sampling from the set of linear spaces intersecting a ball containing $\mathcal M$. Our contribution is to suggest an alternative sampling method for linear spaces $\{x\in \mathbb R^N|Ax=b\}$, namely by sampling $(A,b)$ from the Gaussian distribution on $\mathbb R^{n\times N}\times \mathbb R^{n}.$ We argue in Section \ref{sec:comparison} that our method is more exact and converges at least as quickly as the above.

Until this point, we have assumed that $\mathcal M$ is the set of nonsingular solutions to an implicit system of polynomial equations~\cref{defining-equations}. In fact, our main theorem also holds for open submanifolds of such a set of solutions. We call them \emph{algebraic manifolds}. Throughout this article we fix an $n$-dimensional algebraic manifold $\mathcal M\subset \mathbb R^N$.

To state our result, we fix a measurable function $f\colon \mathcal M\to \mathbb R_{\geq0}$ with finite integral over $\cM$. We define the auxiliary function $\overline f\colon \HR^{n\times N}\times \HR^n\to \mathbb R$ as follows:
\[
\overline f(A,b)\coloneqq \sum_{x\in \mathcal M: Ax = b} \frac{f(x)}{\alpha(x)} \quad \text{ where } \quad\alpha(x)\coloneqq \frac{\sqrt{1+ \langle x, \Pi_{\mathrm{N}_x \cM}\, x\rangle}}{(1+\Vert x\Vert^2)^\frac{n+1}{2}}\;\frac{\Gamma\left(\frac{n+1}{2}\right)}{\sqrt{\pi}^{\,n+1}}
\]
and where $\Pi_{\mathrm{N}_x \cM}: \HR^N \to \mathrm{N}_x \cM$ denotes the orthogonal projection onto the normal space of $\cM$ at $x\in \cM$.
Note that this projection can be computed from the implicit equations for $\mathcal M$, because $\mathrm{N}_x \cM$ is the row-span of the Jacobian matrix $J(x) = [\frac{\partial F_i}{\partial x_j}(x)]_{1\leq i\leq r, 1\leq j\leq N}$. Therefore, if $Q\in\HR^{N\times r}$ is the Q-factor from the QR-decomposition of $J(x)^T$, then we have $\mathrm{N}_x \cM = QQ^T$. This means that we can easily compute $\overline f(A,b)$ from $\mathcal M\cap \cL_{A,b}$.

The operator $f\mapsto \overline f$ allow us to state the following main result, making our new method precise.

\begin{theorem} \label{theorem1}
Let $\varphi(A,b)$ be the probability density for which the entries of $(A,b)\in\mathbb{R}^{n\times N} \times \HR^n$ are i.i.d.\ standard Gaussian. In the notation introduced above:

(1) The integral of $f$ over $\mathcal M$ is the expected value of $\overline f$:
\[
	\int_\mathcal{M}f(x)\,\d x = \mathbb E_{(A,b)\sim\varphi}\overline{f}(A,b).
\]

(2) Assume that $f:\mathcal M\to \HR$ is nonnegative and that $	\int_\mathcal{M}f(x)\,\d x$ is positive and finite. Let $X\in \mathcal M$ be the random variable obtained by choosing a pair $(A,b)\in \mathbb R^{n\times N}\times \mathbb R^{n}$ with probability
\begin{equation*}
\psi(A,b) := \frac{\varphi(A,b)\,\overline f(A,b)}{\mean_{\varphi} (\overline f)}
\end{equation*}
and choosing one of the finitely many points $X$ of the intersection $\mathcal M \cap \mathcal L_{A,b}$ with probability $f(x) \alpha(x)^{-1}\overline f(A,b)^{-1}$. Then $X$ is distributed according to the scaled density $f(x)/(\int_\mathcal{M}f(x)\,\d x)$ associated to $f(x)$.
\end{theorem}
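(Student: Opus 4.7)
The plan is to prove part~(1) first; part~(2) then follows by computing the marginal density of $X$ and invoking part~(1) twice.

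For part~(1), I would combine Fubini's theorem with the area formula. Writing out the expectation and interchanging sum and integral,
\[
    \mean_\varphi \overline f = \int_{\HR^{n\times N}} \int_{\HR^n} \varphi(A,b) \sum_{x \in \cM,\, Ax=b} \frac{f(x)}{\alpha(x)} \, \d b \, \d A.
\]
For generic $A\in\HR^{n\times N}$, the linear map $A$ restricts to a local diffeomorphism $\cM \to \HR^n$, so by the area formula applied in the variable $b$, the inner double object equals $\int_\cM \varphi(A,Ax)\, f(x)\, \alpha(x)^{-1}\, J_A(x)\, \d x$, where $J_A(x) := |\det(A|_{\mathrm{T}_x\cM})|$. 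Swapping the order of integration reduces the claim to the pointwise identity
\[
    \int_{\HR^{n\times N}} \varphi(A,Ax)\, J_A(x) \, \d A \;=\; \alpha(x), \qquad x \in \cM.
\]

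The heart of the argument is this Gaussian matrix integral. I would use the orthogonal decomposition $\HR^N = \mathrm{T}_x\cM \oplus \mathrm{N}_x\cM$ to split $A = (A_T,A_N)$ with $A_T\in\HR^{n\times n}$ and $A_N\in\HR^{n\times(N-n)}$; then $\Vert A\Vert_F^2 = \Vert A_T\Vert_F^2+\Vert A_N\Vert_F^2$, $J_A(x)=|\det A_T|$, and $Ax = A_T x_T + A_N x_N$ where $x_N = \Pi_{\mathrm{N}_x\cM}\, x$ satisfies $\Vert x_N\Vert^2 = \langle x,\Pi_{\mathrm{N}_x\cM}\, x\rangle$. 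Completing the square in $A_N$ against the quadratic $\Vert A_N x_N\Vert^2$ performs the $A_N$-integration and produces a factor of $(1+\Vert x_N\Vert^2)^{-n/2}$ together with an induced Gaussian weight on $A_T$. Rescaling one column of $A_T$ by $\sqrt{(1+\Vert x_N\Vert^2)/(1+\Vert x\Vert^2)}$ then reduces the remaining integral to the absolute determinantal moment $\int |\det B| \, e^{-\Vert B\Vert_F^2/2} \, \d B$, which I would evaluate via the QR decomposition: the diagonal entries of $R$ are independent $\chi$-distributed variables, and the telescoping product of their means yields $\Gamma((n+1)/2)/\Gamma(1/2)$ up to powers of $2$ and $\pi$. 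Matching all normalization constants against the stated $\alpha(x)$ is the main bookkeeping obstacle, but the powers of $1+\Vert x\Vert^2$ and $1+\Vert x_N\Vert^2$ are forced by the rescaling above.

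Part~(2) then follows by a short calculation. For any measurable $E\subseteq \cM$,
\[
    \Prob(X\in E) = \int \psi(A,b) \sum_{x\in \cM\cap \cL_{A,b}} \mathbf{1}_E(x)\, \frac{f(x)\,\alpha(x)^{-1}}{\overline f(A,b)} \, \d A\, \d b = \frac{\mean_\varphi \overline{f\, \mathbf{1}_E}}{\mean_\varphi \overline f},
\]
since the factor $\overline f(A,b)$ in $\psi$ cancels the $\overline f(A,b)^{-1}$ from the conditional step. Applying part~(1) to both the numerator and the denominator gives $\Prob(X\in E) = \int_E f\,\d x/\int_\cM f\,\d x$, which is exactly the claim.
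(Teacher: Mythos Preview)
Your proposal is correct and matches the paper's overall architecture: both reduce part~(1) to the pointwise identity $\int_{\HR^{n\times N}}\varphi(A,Ax)\,|\det(A|_{\Tang{x}{\cM}})|\,\d A=\alpha(x)$ (the paper isolates this as a separate lemma), and both derive part~(2) from part~(1). The differences are in execution. For the Gaussian matrix integral, the paper rotates so that $x$ lies along a coordinate axis and then invokes Wishart determinant moments from Muirhead's book, whereas you split $A$ along the tangent/normal decomposition at $x$, integrate out $A_N$ by completing the square, and compute $\mean|\det B|$ by hand via QR; your route is more self-contained but requires slightly more bookkeeping. For the reduction to the key identity, the paper applies the coarea formula to the map $(A,x)\mapsto(A,Ax)$ on $\HR^{n\times N}\times\cM$, while you fix $A$ and apply the area formula to $x\mapsto Ax$ on $\cM$; these are equivalent. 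For part~(2), your cancellation argument $\Prob(X\in E)=\mean_\varphi\overline{f\mathbf 1_E}/\mean_\varphi\overline f$ followed by two applications of part~(1) is shorter than the paper's, which tracks the density of the pair $(A,x)$ through both sampling stages via a coarea-for-densities corollary and then marginalizes.
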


Using the formula for $\alpha(x)$ we can already evaluate $\overline f(A,b)$ for an integrable function~$f$. Thus we can approximate the integral of $f$ by computing the empirical mean
$$\mathrm{E}(f, k) = \tfrac{1}{k}(\overline{f}_1(A,b)+\cdots+\overline{f}_k(A,b))$$
of a sample drawn from $(A,b)\sim \varphi$. The next lemma yields a bound for the rate of convergence of this approach. It is an application of Chebyshev's inequality and proved in Section \ref{sec:proof_lemma_rate_of_convergence}.

\begin{lemma}\label{lemma_rate_of_convergence}
Assume that $\vert f(x)\vert$ and $\Vert x\Vert$ are bounded on $\cM$. Then,
the variance $\sigma^2(\overline{f})$ of $\overline{f}(A,b)$ is finite and for $\varepsilon>0$ we have
$\Prob\{\vert \mathrm{E}(f,k) -  \int_\mathcal{M}f(x)\,\d x \vert \geq \varepsilon \} \leq \frac{\sigma^2(\overline{f})}{\varepsilon^2 k}.$
\end{lemma}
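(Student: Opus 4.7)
The plan is to bound $\overline f(A,b)$ uniformly (almost surely with respect to $\varphi$), from which the finiteness of the variance is immediate, and then to invoke Chebyshev directly on the i.i.d.\ sample $\overline f_1,\dotsc,\overline f_k$.

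First I would control the integrand $f(x)/\alpha(x)$ pointwise on $\mathcal M$. Since $\|x\|$ is bounded on $\mathcal M$ by some $R>0$, we have $(1+\|x\|^2)^{(n+1)/2}\le (1+R^2)^{(n+1)/2}$. Also, because $\Pi_{\mathrm N_x\mathcal M}$ is an orthogonal projection, $\langle x,\Pi_{\mathrm N_x\mathcal M}x\rangle\ge 0$, so $\sqrt{1+\langle x,\Pi_{\mathrm N_x\mathcal M}x\rangle}\ge 1$. Combining with the boundedness $|f(x)|\le C$, I get a uniform bound
\[
\frac{|f(x)|}{\alpha(x)}\;\le\;\frac{C\,\sqrt{\pi}^{\,n+1}(1+R^2)^{(n+1)/2}}{\Gamma(\tfrac{n+1}{2})}\;=:\;M.
\]

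Next I would bound the cardinality of $\mathcal M\cap \mathcal L_{A,b}$. Since $\mathcal M$ is an open submanifold of the nonsingular real locus of the algebraic variety $V=\{F_1=\dotsb=F_r=0\}\subset \mathbb R^N$ of dimension $n$, and since for almost every $(A,b)\in \mathbb R^{n\times N}\times \mathbb R^n$ the affine subspace $\mathcal L_{A,b}=\{x:Ax=b\}$ has complementary dimension and meets $V$ transversely, B\'ezout's theorem bounds $\#(V\cap\mathcal L_{A,b})\le D$ where $D=\deg V$ (counting complex points, so a fortiori for real ones). Consequently $|\overline f(A,b)|\le MD$ almost surely, which in particular gives $\sigma^2(\overline f)\le M^2D^2<\infty$.

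Finally, by Theorem~\ref{theorem1}(1), $\mathbb E_{\varphi}\overline f=\int_{\mathcal M}f(x)\,\d x$, and the samples $\overline f_1(A,b),\dotsc,\overline f_k(A,b)$ are i.i.d.\ with this mean and variance $\sigma^2(\overline f)$, so $\mathrm E(f,k)$ has mean $\int_\mathcal M f(x)\,\d x$ and variance $\sigma^2(\overline f)/k$. Chebyshev's inequality then yields the claimed tail bound. The only mildly subtle step is the uniform intersection bound via B\'ezout; everything else is a straightforward estimate on $\alpha(x)$ combined with the standard Chebyshev estimate.
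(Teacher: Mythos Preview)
Your proof is correct and follows essentially the same route as the paper: both arguments lower-bound $\alpha(x)$ via $\sqrt{1+\langle x,\Pi_{\mathrm N_x\cM}x\rangle}\ge 1$ and the boundedness of $\|x\|$, invoke the degree bound on $\#(\cM\cap\cL_{A,b})$ (the paper uses its \cref{lem_intersection} rather than citing B\'ezout directly), conclude that $\overline f$ is a.s.\ bounded so $\sigma^2(\overline f)<\infty$, and finish with Chebyshev. The only cosmetic difference is that the paper records the resulting explicit bound on $\sigma^2(\overline f)$ as a displayed inequality for later use.
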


In \cref{bound_sigma} we provide a deterministic bound for $\sigma^2(\overline{f})$, which involves the degree of the ambient variety of $\cM$ and upper bounds for $\Vert x\Vert$ and $\vert f(x)\vert$ on $\cM$. In our experiments we also use the empirical variance $s^2(\overline{f})$ of a sample for estimating $\sigma^2(\overline{f})$.

For sampling $(A,b)\sim \psi$ in the second part of \cref{theorem1} we could use MCMC sampling. Note that this would employ MCMC sampling for the flat space $\mathbb{R}^{n\times N}\times \HR^n$, which is easier than MCMC for nonlinear spaces like $\cM$. Nevertheless, in this paper we use the simplest method for sampling $\psi$, namely \emph{rejection sampling}. This is used in the experiment section and explained in Section \ref{sampling_psi}.

\subsection{Outline}
In Section \ref{sec:experiments} of this paper, we show applications of this method to examples in topological data analysis and statistical physics. We discuss preliminaries for the proof of the main theorem in Section \ref{sec:prelim} and give the full proof in Section \ref{sec:proof_theorem1}. In Section \ref{sec:proof_lemma_rate_of_convergence} we prove \cref{lemma_rate_of_convergence}. We prove a variant of our theorems for projective algebraic manifolds in Section \ref{sec:projective}. In Section \ref{sec:comparison} we review other methods that make use of a kinematic formula for sampling. Finally, in Section \ref{sec:discussion} we briefly discuss the limitations of our method and possible future work.

\subsection{Acknowledgements}
We would like to thank the following people for helping us with this article:   Diego Cifuentes, Benjamin Gess, Christiane G\"orgen, Tony Lelievre, Mateusz Michalek, Max von Renesse, Bernd Sturmfels and Sascha Timme. We also would like to thank two anonymous referees for valuable comments on the paper.

\subsection{Notation}
The euclidean inner product on $\mathbb R^N$ is $\langle x,y\rangle := x^Ty$ and the associated norm is $\Vert x\Vert := \sqrt{\langle x,x\rangle}$. The unit sphere in~$\HR^N$ is $\HS^{N-1}:=\{x\in \HR^N: \Vert x\Vert =1\}$.
For a function $f:\cM\to\cN$ between manifolds we denote by $\deriv{f}{x}$ the derivative of $f$ at $x\in\cM$. The tangent space of $\cM$ at $x$ is denoted $\mathrm{T}_x\cM$ and the normal space is $\mathrm{N}_x\cM$.

\section{Experiments}\label{sec:experiments}

In this section we apply our main results to examples. All experiments have been performed on macOS 10.14.2 on a computer with Intel Core i5 2,3GHz (two cores) and 8 GB RAM memory. For computing the intersections with linear spaces, we use the numerical polynomial equation solver \texttt{HomotopyContinuation.jl}~\cite{BT}. For plotting we use \texttt{Matplotlib}~\cite{Hunter:2007}. For sampling from the distribution $\psi(A,b)$ we use rejection sampling as described in Section \ref{sampling_psi}.

\begin{figure}[ht]
  \begin{center}
\includegraphics[height = 5cm]{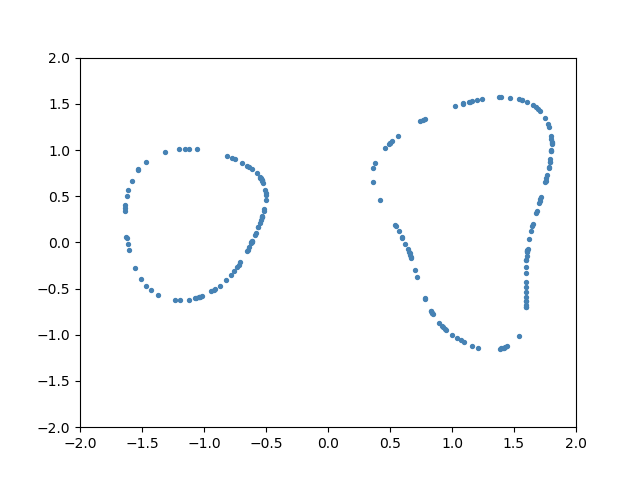}\hfill
\includegraphics[height = 5cm]{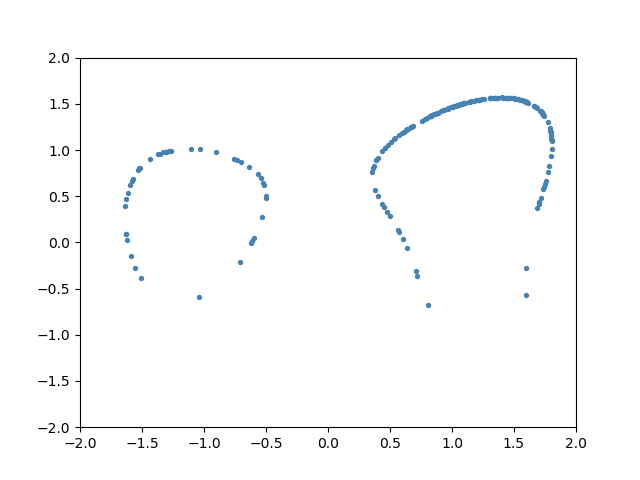}
\end{center}
\caption{Left picture: a sample of 200 points from the uniform distribution on the curve \cref{eq1}. Right picture: a sample of 200 points from the uniform distribution scaled by $e^{2y}$. \label{fig1}}
\end{figure}

As a first simple example, we consider the plane curve $\cM$ given by the equation
\begin{equation}\label{eq1}
 x^4+ y^4 - 3x^2 - xy^2 - y + 1 = 0.
\end{equation}
We have $\mathrm{vol}(\cM) = \mean_\varphi (\overline{1})$. We can therefore estimate the volume of the curve $\cM$ by taking a sample of i.i.d.\ pairs $(A,b)$ and computing the empirical mean $\mathrm{E}(1,k)$ of $\overline{1}$ of the sample. A sample of $k = 10^5$ yields $\mathrm{E}(1,k) = 11.2$. In \cref{lemma_rate_of_convergence} we take $\varepsilon = 0.1$ and the variance of the sample $s^2$, and get an upper bound of $\frac{s^2}{\varepsilon^2k} = 0.008$. Therefore, we expect that $11.2$ is a good approximation of the true volume. We can also take the deterministic upper bound from \cref{bound_sigma} for the variance $\sigma^2$ of $\overline{1}$. Here, we take $\sup_{x\in\cM}\Vert x\Vert = \sqrt{8}$. To get an estimate with accuracy at least $\varepsilon = 0.1$ with probability at least $0.9$ we need a sample of size $k \geq \frac{\sigma^2}{\varepsilon^2 \cdot 0.9} \geq 1421300$. Taking such a sample size we get an estimated volume of  $\approx 11.217$. The code that produced this result is available at \cite{integration}.

Next, we use the second part of Theorem \cref{theorem1} to generate random samples on $\cM$ -- code is available at \cite{sampling}. We show in the left picture of \cref{fig1} a sample of 200 points drawn uniformly from the curve. The right picture shows 200 points drawn from the density given by the normalization of
$
f(x,y) = e^{2y}.
$
As can be seen from the pictures the points drawn from the second distribution concentrate in the upper half of $\mathcal M$, whereas points from the first distribution spread equally around the curve. This experiment also shows how our method generates global samples. The curve has more than one connected component, which is not an obstacle for our method.

Our method is particularly appealing for hypersurfaces like \cref{eq1} because intersecting a hypersurface with a linear space of dimension $1$ reduces to solving a single univariate polynomial equation. This can be done very efficiently, for instance using the algorithm from  \cite{SG2003}, and so for hypersurfaces we can easily generate large sample sets.

\begin{figure}[ht]
  \begin{center}
\includegraphics[height = 5cm]{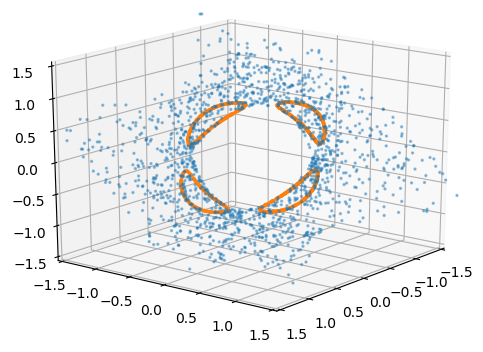}\hfill
\includegraphics[height = 5cm]{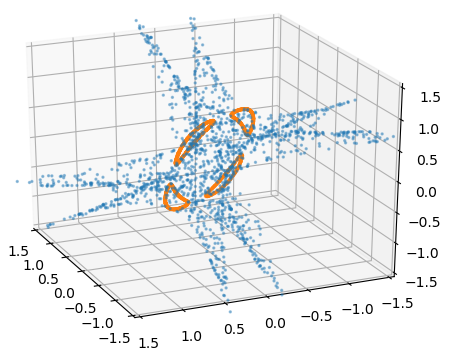}
\end{center}
\caption{The blue points are a sample of 1569 points from the complex Trott curve \cref{trott} seen as a variety in~$\HR^4$ projected to $\HR^3$. The orange points are a sample of 1259 points from the real part of the Trott curve.\label{fig0}}
\end{figure}

The pictures suggest to use sampling for visualization. For instance, we can visualize a semialgebraic piece of the complex and real part of the \emph{Trott curve} $T$, defined by the equation
\begin{equation}\label{trott}
144(x_1^4+x_2^4) - 225(x_1^2+x_2^2) + 350x_1^2x_2^2 + 81 = 0.
\end{equation}
The associated complex variety in $\HC^2$ can be seen as a real variety $T_\HC$ in $\HR^4$. We sample from the real Trott curve $T$ and the complex Trott curve $T_\HC$ intersected with the box $-1.5<\mathrm{Real}(x_1), \mathrm{Imag}(x_1),\mathrm{Real}(x_2), \mathrm{Imag}(x_2)<1.5$. Then, we take a random projection $\HR^4\to\HR^3$ to obtain a sample in $\HR^3$ (the projected sample is \emph{not} uniform on the projected semialgebraic variety). The outcome of this experiment is shown in \cref{fig0}.

\subsection{Application to statistical physics}\label{sec:physics}
In this section we want to apply \cref{theorem1} to study a physical system of $N$ particles $q=(q_1,\ldots,q_N)\in\cM$, where $\cM\subseteq (\mathbb R^3)^N$ is the manifold that models the spacial constraints of the $q_i$. In our example we have $N=6$ and the $q_i$ are the spacial positions of carbon atoms in a \emph{cyclohexane molecule}. The constraints of this molecule are the following algebraic equations:
\begin{equation}\label{cyclohexane_eq}
\cM = \{q=(q_1,\ldots,q_6)\in (\HR^3)^6 \mid
\Vert q_1-q_2\Vert^2 = \cdots= \Vert q_5-q_6\Vert^2 = \Vert q_6-q_1\Vert^2 = c^2\},
\end{equation}
where $c$ is the \emph{bond length} between two neighboring atoms (the vectors $q_i-q_{i+1}$ are called \emph{bonds}). In our example we take $c^2=1$ (unitless). Due to rotational and translational invariance of the equations we define $q_1$ to be the origin, $q_6=(c,0,0)$ and $q_5$ to be rotated, such that its last entry is equal to zero. We thus have $11$ variables.

\begin{figure}[ht]
  \begin{center}
\includegraphics[height = 5.5cm]{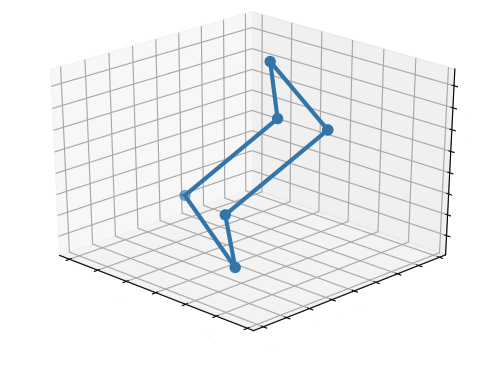}
\end{center}
\caption{\label{fig_physics1}The picture shows a point from the variety \cref{cyclohexane_eq}, for which the angles between two consecutive bonds are all equal to $110.9^{\circ}$ degrees. This configuration is also known as the ``chair'' \cite{BN2011}.}
\end{figure}

Lelievre et.\ al.\  \cite{free_energy} write
``In the framework of statistical physics, macroscopic quantities of interest are written as averages over [...]
probability measures on all the admissible microscopic configurations.'' As the probability measure we take the canonical ensemble \cite{free_energy}. If $E(q)$ denotes the total energy of a configuation $q$, the density in the canonical ensemble is proportional to $f(q) = e^{-E(q)}.$
That is, a configuration is most likely to appear when its energy is minimal. We model the energy of a molecule using an interaction potential, namely the \emph{Lennard Jones potential}
$V(r) =  \frac{1}{4}\,(\frac{c}{r})^{12} -\frac{1}{2}\, (\frac{c}{r})^{6};$
see, e.g., \cite[Equation (1.5)]{free_energy}. Then, the energy function of a system is
$$E(q) =  \sum_{1\leq i<j\leq N} V(\Vert q_i-q_j\Vert).$$
In this example we consider as quantity the average angle between neighboring bonds $q_{i-1} - q_i$ and $q_{i+1} - q_i$
$$\theta(q)=\frac{\angle(q_6 - q_1, q_2-q_1) + \cdots + \angle(q_5-q_6, q_1-q_6)}{6},$$
where $\angle(b_1, b_2):=\arccos \frac{\langle b_1,b_2\rangle }{\Vert b_1\Vert \Vert b_2\Vert}$. We compute the macroscopic state of $\theta(q)$ by determining its distribution
$\mathrm{Prob} \{\theta(q) = \theta_0\}  = \frac{1}{H}\int_{\theta(q) = \theta_0} f(q) \mathrm{d} q,$
where $H=\int_{V} f(q) \mathrm{d} q$ is the normalizing constant. For comparing the probabilities of different values for $\theta$ it suffices to compute
$$\rho(\theta_0)=\int_{\theta(q) = \theta_0} f(q) \mathrm{d} q.$$
We approximate this integral as
$\rho(\theta_0) \approx \frac{\mu_1(\theta_0)}{\mu_2(\theta_0)},$
where
\begin{align*}
\mu_1(\theta_0) &= \int_{\theta(q)>\theta_0 - \Delta\theta\atop \theta(q) < \theta_0 + \Delta\theta} f(q)\; \mathrm{d} q \;\text{ and }
\mu_2(\theta_0) = \int_{\theta(q)>\theta_0 - \Delta\theta\atop \theta(q) < \theta_0 + \Delta\theta} 1 \;\mathrm{d} q
\end{align*}
for some $\Delta \theta >0$ (in our experiment we take $\Delta \theta = 3^\circ$), and we approximate both $\mu_1(\theta_0)$ and $\mu_2(\theta_0)$ for several values of $\theta$ by their empirical means $\mathrm{E}(f,k)$ and $\mathrm{E}(1,k)$, and using \cref{theorem1}. We take $k=10^4$ samples, respectively. The code for this is available at~\cite{cyclo}.

Figure \ref{fig_physics_experiment} shows both the values of the empirical means in the logarithmic scale, and
the ratio of $\mu_1(\theta_0)$ and $\mu_2(\theta_0)$.

\begin{figure}[ht]
  \begin{center}
\includegraphics[height = 5.5cm]{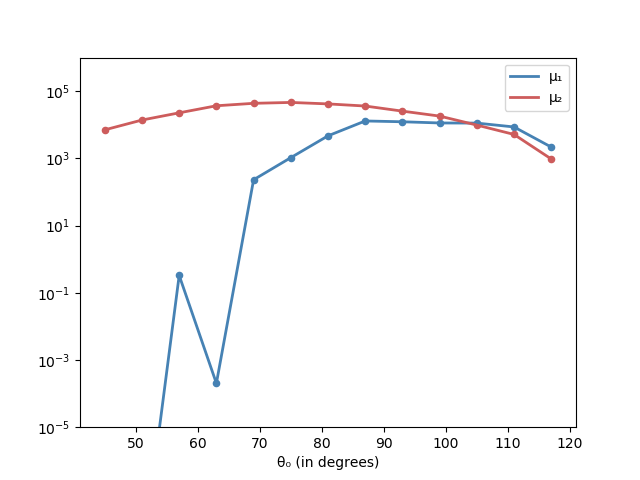}\includegraphics[height = 5.5cm]{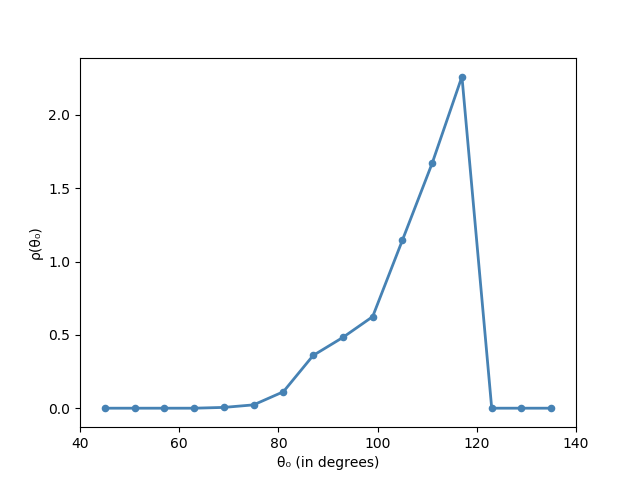}
\end{center}
\caption{\label{fig_physics_experiment}
The left picture shows the approximations of $\mu_1(\theta_0)$ and $\mu_2(\theta_0)$ by the empirical means $\mathrm{E}(f,k)$ and $\mathrm{E}(1,k)$. Both integrals were approximated independently, each by an empirical mean obtained from $10^4$ intersections with linear spaces. The right picture shows the ratio of the empirical means, which approximate $\rho(\theta_0)$.
}
\end{figure}

How good is our estimate? From the plot above we can deduce that $\varepsilon = 10^2$ is a good accuracy for both $\mu_1(\theta_0)$ and $\mu_2(\theta_0)$. Using the variances $s_1^2$ and $s_2^2$ of the samples, respectively, we get
$
\frac{s_1^2}{\varepsilon^2k} \leq 0.002$ and $\frac{s_2^2}{\varepsilon^2k}  \leq 0.002.$
Hence, by \cref{lemma_rate_of_convergence} we expect that the probability that the empirical mean $E(f,k)$ deviates from $\mu_1(\theta_0)$ by more than $\varepsilon$ and that the probability that $E(1,k)$ deviates from $\mu_2(\theta_0)$ by more than $\varepsilon$ are both at most $0.2\%$. We conclude that our approximation of $\rho(\theta) = H\,\mathrm{Prob}\{\theta(q) = \theta_0\}$ is a good approximation.

In fact, \cref{fig_physics_experiment} shows a peak at around $\theta = 114^{\circ}$. It is known that the total energy of the cyclohexane system is minimized when all angles between consecutive bonds achieve $110.9^{\circ}$; see \cite[Chapter 2]{BB2010}. Therefore, our experiment truly gives a good approximation of the molecular geometry of cyclohexane. An example where all the angles between consecutive bonds are $110.9^{\circ}$ is shown in \cref{fig_physics1}.

\subsection{Application to topological data analysis}
We believe that \cref{theorem1} will be useful for researchers working with in topological data analysis using persistent homology (PH). Persistent homology is a tool to estimate the homology groups of a topological space from a finite point sample. The underlying idea is as follows: for varying $t$, put a ball of radius~$t$ around each point and compute the homology of the union of those balls. One then looks at topological features that persists for large intervals in $t$. It is intuitively clear that the point sample should be large enough to capture all of the topological information of its underlying space, and, on the other hand, the sample should be small enough to remain feasible for computations. Dufresne et al.\ \cite{sampling_hauenstein} comment ``Both the theoretical framework for the PH pipeline and its computational costs drive the requirements of a suitable sampling algorithm.'' (For an explanation of the PH pipeline see \cite[Sect. 2]{sampling_hauenstein} and the references therein). They develop an algorithm that takes as input a denseness parameter $\epsilon$ and outputs a sample where each point has at most distance $\epsilon$ to its nearest neighbor. At the same time, their method is trying to keep the sample size as small as possible. In the context of topological data analysis we see our algorithm as an alternative to \cite{sampling_hauenstein}.

In the following we use Theorem \cref{theorem1} for generating samples as input for the PH pipeline from \cite{sampling_hauenstein}. The output of this pipeline is a \emph{persistence diagram}. It shows the appearance and the vanishing of topological features in a 2-dimensional plot. Each point in the plot corresponds to an $i$-dimensional ``hole'', where the $x$-coordinate represents the time $t$ when the hole appears, and the $y$-coordinate is the time when it vanishes. Points that are far from the line $x=y$ should be interpreted as signals coming from the underlying space. The number of those points is used as an estimator for the Betti number $\beta_i$. For computing persistence diagrams we use \texttt{Ripser} \cite{ripser}.

\begin{figure}[ht]
  \begin{center}
\includegraphics[height = 5cm]{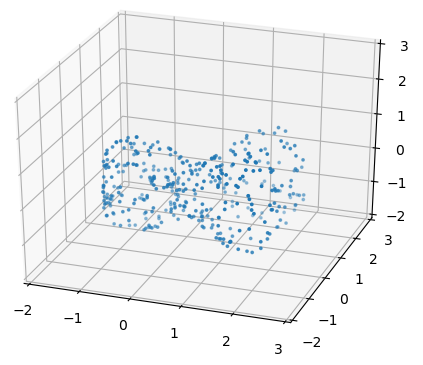}\hfill
\includegraphics[height = 5cm]{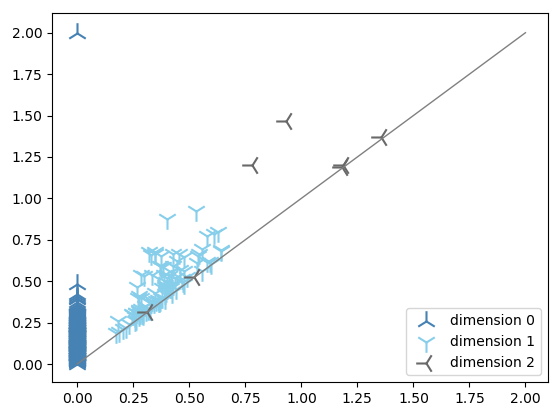}
\end{center}
\caption{The left picture shows a sample of 386 points from the variety \cref{V1}. The right picture shows the corresponding persistence diagram.\label{fig3}}
\end{figure}

First, we consider two toy examples from \cite[Section 5]{sampling_hauenstein}: the surface $\cS_1$ is given by
\begin{equation}\label{V1}
4x_1^4+7x_2^4+3x_3^4-3-8x_1^3+2x_1^2x_2-4x_1^2-8x_1x_2^2-5x_1x_2+8x_1-6x_2^3+8x_2^2+4x_2 = 0.
\end{equation}
\cref{fig3} shows a sample of 386 points from the uniform distribution on $\cS_1$. The associated persistence diagram suggest one connected component, two 1-dimensional and two 2-dimensional holes. The latter two come from two sphere-like features of the variety. The outcome is similar to the diagram from \cite[Figure 6]{sampling_hauenstein}. Considering that the diagram in this reference was computed using~1500 points \cite{parker}, we think that the quality of our diagram is good.

\begin{figure}[ht]
  \begin{center}
\includegraphics[height = 5cm]{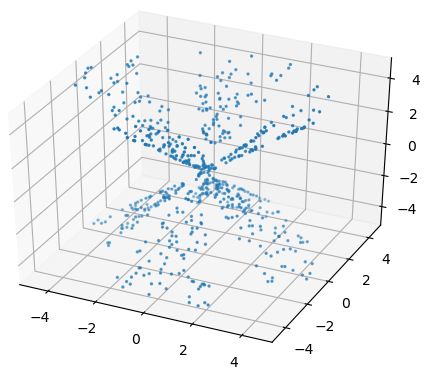}\hfill
\includegraphics[height = 5cm]{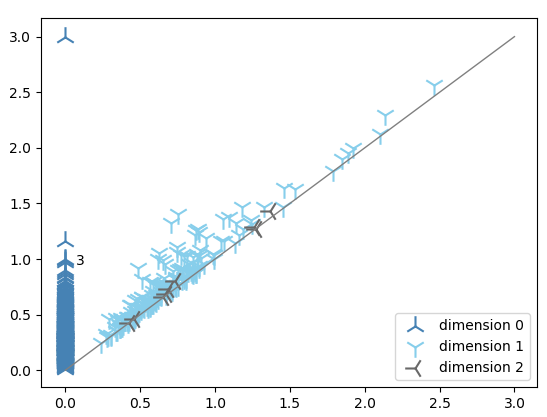}
\end{center}
\caption{The left picture shows a sample of 651 points from the variety $\cS_2$. The right picture shows the corresponding persistence diagram.\label{fig4}}
\end{figure}

The second example is the surface $\cS_2$ given by
{\small
\begin{equation*}
144(x_1^4+x_2^4)-225(x_1^2+x_2^2)x_3^2+350x_1^2x_2^2+81x_3^4+x_1^3+7x_1^2x_2+3(x_1^2+x_1x_2^2)-4x_1-5(x_2^3-x_2^2-x_2)=0.
\end{equation*}
}
\cref{fig3} shows a sample of 651 points from the uniform distribution on $\cS_2$. The persistence diagram on the right suggest one or five connected components. The true answer is five connected components. The diagram from \cite[Figure 6]{sampling_hauenstein} captures the correct homology more clearly, but was generated from a sample of 10000 points \cite{parker}.

\begin{figure}[ht]
  \begin{center}
\includegraphics[height = 5cm]{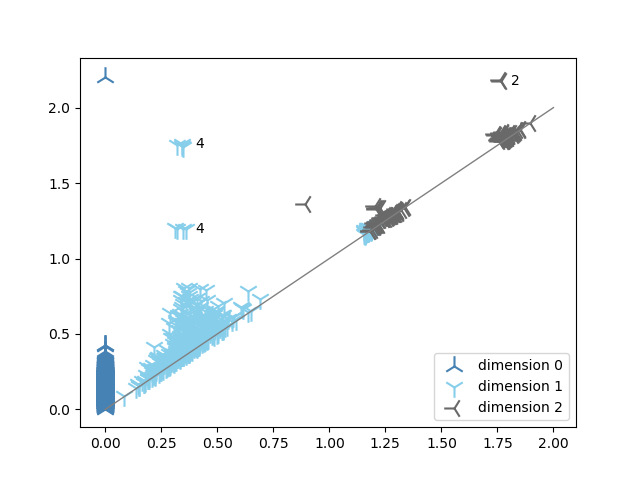}
\end{center}
\caption{The picture shows the persistence diagram of a sample of 1400 points from the variety given by \cref{linkages}.\label{fig5}}
\end{figure}

The next example is from a specific application in kinematics. We quote \cite[Sect. 5.3]{sampling_hauenstein}: ``Consider a regular pentagon in the plane consisting of links with unit length, and with one of the links fixed to lie along the $x$-axis with leftmost point at $(0,0)$. The set of all possible configurations of this regular pentagon is a real algebraic variety.''
The equations of the configuration space are
\begin{equation}\label{linkages}
(x_1+x_2+x_3)^2 + (1+x_4+x_5+x_6)^2 - 1 = 0, \; x_1^2+x_4^2=1, \; x_2^2+x_5^2=1, \; x_3^2+x_6^2=1.
\end{equation}

Here, the zero-th homology is of particular importance because, if the variety is connected, ``the mechanism has one assembly mode which can be continuously deformed to all possible configurations'' \cite{sampling_hauenstein}. \cref{fig5} shows the persistence diagram of a sample of 1400 points from the configuration space. It suggests that the variety indeed has only one connected component. We moreover observe eigth holes of dimension 1 and one or three 2-dimensional holes. The correct Betti numbers are $\beta_0=1,\beta=1=8,\beta_2=1$; see \cite{FS2007}.

\section{Preliminaries}\label{sec:prelim}

In this we first define the degree of a real algebraic variety and explain why the number of intersection points of $\mathcal M$ with a linear space of the right codimension does not exceed the degree of its ambient variety. Then, we recall the coarea formula of integration and discuss some consequences. Finally, we explain how to sample from $\psi(A,b)$ using rejection sampling and we prove an algorithm for sampling $\cL_{A,b}$ in implicit form.

\subsection{Real algebraic varieties} \label{real-algebraic-varieties}
For the purpose of this paper, a (real, affine) \emph{algebraic variety} is a subset $\cV$ of $\mathbb R^N$ such that there exists a set of polynomials $F_1,\dotsc, F_k$ in $N$ variables such that $\cV$ is their set of common zeros. All varieties have a \emph{dimension}  and a \emph{degree}. The dimension of $\cV$ is defined as the dimension of its subspace of non-singular points $\cV_0$, which is a manifold. For the degree we give a definition in the following steps.

 An algebraic variety $\cV$ in $\mathbb R^N$ is \emph{homogeneous} if for all $t\in \mathbb R\setminus \{0\}$ and $x\in \cV$ we have $tx\in \cV$. Homogeneous varieties are precisely the ones where we can choose the $F_i$ above to be homogenous polynomials. Naturally, homogeneous varieties live in the $(N-1)$-dimensional \emph{real projective space} $\mathbb P^{N-1}$. This space is defined as the set $(\mathbb R^{N}\setminus \{0\})/\sim$, where $x\sim y$ if $x$ and $y$ are collinear.
 It comes with a canonical projection map $p\colon (\mathbb R^N\setminus 0) \to \mathbb P^{N-1}$. Then, a \emph{projective variety} is defined as the image of a homogeneous variety $\cV$ under $p$. Its \emph{dimension} is $\dim \cV - 1$.

 Similarly, we define \emph{complex affine, homogeneous, and projective varieties} by replacing $\mathbb R$ with~$\mathbb C$ in the previous definitions. We can pass from real to complex varieties as follows. Let $\cV\subset \mathbb R^N$ be a real affine variety. Its \emph{complexification} $\cV_{\mathbb C}$ is defined as the complex affine variety
\[ \cV_\mathbb{C}\coloneqq\{x\in \mathbb C^n : f(x) = 0 \text{ for all real polynomials $f$
 vanishing on $\cV$} \}.\]
The ``all'' is crucial here. Consider, for instance, the variety in $\HR^2$ defined by $x_1^2+x_2^2 =0$. Obviously, this variety is a single point $\{(0,0)\}$, but $\{x\in \HC^2 : x_1^2+x_2^2 =0 \} = \{(t,\sqrt{-1}\,t) : t\in \HC\}$ is one-dimensional. Nevertheless, the polynomials $x_1=0,x_2=0$ also vanish on $\{(0,0)\}$ and so the complexification of $\cV=\{(0,0)\}$ is $\cV_\HC=\{(0,0)\}$. The following lemma is important.
\begin{lemma}[Lemma 8 in \cite{whitney}] The real dimension of $\cV$ and the complex dimension of its complexification $\cV_\HC$ agree.\label{lemma8whitney}
\end{lemma}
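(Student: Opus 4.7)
My plan is to combine a local analytic identification at a smooth real point of $\cV$ with a global comparison of Krull dimensions, after observing that $\cV$ is Zariski dense in $\cV_\HC$. Set $d := \dim_\HR \cV$, the manifold dimension of the smooth locus $\cV_0$, and pick a smooth point $x_0 \in \cV_0$ where $\cV$ attains local dimension $d$. By Hilbert's basis theorem the ideal $I(\cV) \subset \HR[x_1,\ldots,x_N]$ of real polynomials vanishing on $\cV$ is generated by finitely many $f_1, \ldots, f_r$. Smoothness of $x_0$ forces the real Jacobian $J(x_0) = [\partial f_i/\partial x_j(x_0)]$ to have rank $N-d$; since $J(x_0)$ has real entries its rank over $\HC$ equals its rank over $\HR$, so the complex implicit function theorem exhibits $V_\HC(f_1,\ldots,f_r)$ as a complex submanifold of $\HC^N$ of complex dimension $d$ in a neighbourhood of $x_0$. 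The same $f_i$ generate $I_\HC := I(\cV)\cdot \HC[x_1,\ldots,x_N]$, so this submanifold coincides locally with $\cV_\HC$, yielding the lower bound $\dim_\HC \cV_\HC \geq d$.

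For the reverse inequality I would first verify that $\cV$ is Zariski dense in $\cV_\HC$: a complex polynomial $g = g_1 + i g_2$ with $g_1, g_2 \in \HR[x_1,\ldots,x_N]$ vanishes on $\cV \subset \HR^N$ if and only if both $g_1, g_2 \in I(\cV)$, if and only if $g \in I_\HC$. In particular $I_\HC$ is the full vanishing ideal of $\cV_\HC$ in $\HC[x_1,\ldots,x_N]$. I would then reduce the inequality $\dim_\HC \cV_\HC \leq d$ to algebra via Krull dimension: for any finitely generated $\HR$-algebra, Noether normalization base-changes along $\HR \subset \HC$ to a Noether normalization of the same dimension, so
\[
\dim \HR[x_1,\ldots,x_N]/I(\cV) \;=\; \dim \HC[x_1,\ldots,x_N]/I_\HC \;=\; \dim_\HC \cV_\HC,
\]
the last equality being standard complex algebraic geometry. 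Combining with the real-algebraic identity $\dim \HR[x_1,\ldots,x_N]/I(\cV) = \dim_\HR \cV$ then gives $\dim_\HC \cV_\HC = \dim_\HR \cV$.

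The hard part will be this last real-algebraic identity, which over $\HR$ is delicate: for instance $(x_1^2+x_2^2) \subset \HR[x_1,x_2]$ has Krull dimension $1$ while its real zero set is a single point, showing that the identity fails for arbitrary generators. It holds precisely because $I(\cV)$ is the full vanishing ideal and therefore real-radical, a nontrivial fact from real algebraic geometry (the real dimension theorem, as treated in Bochnak--Coste--Roy). If I wanted to sidestep this machinery, I would instead decompose $\cV_\HC$ into complex irreducible components $W_1, \ldots, W_s$, invoke Zariski density to place real points of $\cV$ on each conjugation-invariant component, and rerun the local computation from the first paragraph to bound each $\dim_\HC W_j$ by $d$; the residual obstacle is then confirming that every component contributes \emph{smooth} real points, using that $\cV_\HC$ is defined over $\HR$ so its components come either real or in complex-conjugate pairs, which is essentially the route taken by Whitney.
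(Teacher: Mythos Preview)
The paper does not give its own proof of this lemma: it is stated with attribution to Whitney (``Lemma 8 in \cite{whitney}'') and used as a black box, so there is nothing in the paper to compare your argument against.

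As to the content of your proposal: the overall strategy is sound and is in fact close to how Whitney argues. Your lower bound via the Jacobian at a smooth real point is fine once you note that $I(\cV)$ is by definition the full vanishing ideal, hence real (in the sense of real algebra), so the Jacobian rank at a smooth point really is $N-d$. Your reduction of the upper bound to the equality $\dim \HR[x]/I(\cV)=\dim_\HR\cV$ is also correct, and you rightly flag this as the nontrivial input---it is exactly the real Nullstellensatz/real dimension theorem, and cannot be avoided by purely formal manipulations. The alternative you sketch in the last paragraph (pass to irreducible components of $\cV_\HC$, pair conjugate components, and find smooth real points on the self-conjugate ones) is essentially Whitney's route; the step you call ``the residual obstacle'' is genuine and is where Whitney does the work. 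Either way your proposal is correct in outline, with the hard real-algebraic fact clearly isolated rather than swept under the rug.
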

The \emph{Grassmannian} is a smooth algebraic variety $\mathrm{G}(k,\mathbb{C}^{N})$ that parametrizes linear subspaces of~$\mathbb C^{N}$ of dimension $k$. Furthermore, $k$-dimensional affine-linear subspaces of~$\mathbb C^N$ can be seen as $(k+1)$-dimensional linear subspaces of $\mathbb C^{N+1}$ and are parametrized by the \emph{affine Grassmannian} $\mathrm G_{\mathrm{Aff}}(k,\mathbb C^n)$. A \emph{projective linear space} of dimension $k$ is the image of a linear space $\cL\in \mathrm{G}(k+1,\HC^N)$ under the projection $p$. This motivates to define the projective Grassmannian as $\mathrm{G}(k,\mathbb{P}^{N-1})\coloneqq \{p(\cL) : \cL \in \mathrm{G}(k+1,\mathbb{C}^{N})\}$.

Now, we have gathered all the material to give a precise definition of the degree: let $\cV\subset \mathbb P^{N-1}_\mathbb{C}$ be a complex projective variety of dimension $n$. There exists a unique natural number $d$ and a lower-dimensional subvariety $\cW$ of $\mathrm{G}(N-n,\mathbb{P}^{N-1})$ with the property that for all linear spaces $\cL \in \mathrm{G}(N-n,\mathbb{P}^{N-1})\backslash \cW$ the intersection $\cV\cap \cL$ consists of $d$ distinct points \cite[Sect.~18]{Harris1992}. Furthermore, the number of such intersection points only decreases when $\cL \in \cW$. This number~$d$ is called the \emph{degree} of the projective variety $\cV$.
The \emph{degree} of a complex affine variety $\cV\subset \mathbb C^N$ is defined as the degree of the smallest projective variety containing the image of $\cV$ under the embedding $\mathbb C^N\hookrightarrow \mathbb P_\mathbb{C}^{N}$ sending $x$ to $p([1,x])$.

The definition of degree of complex varieties is standard in algebraic geometry. In this article, however, we are solely dealing with real varieties. We therefore make the following definition, which is not standard in the literature, but which fits in our setting.
\begin{definition}
The \emph{degree} of a real affine variety $\cV$ is the degree of its complexification.
The \emph{degree} of a real projective variety $\cV$ is the degree of the image of the complexification of $p^{-1}(\cV)$ under $p$.
\end{definition}

Using \cref{lemma8whitney} we make the following conclusions, after passing from the Grassmannians $\mathrm{G}_{\mathrm{Aff}}(N-n,\mathbb R^N)$ and $\mathrm{G}(N-n,\mathbb R ^{N})$ to the parameter spaces $\mathbb R^{n\times N} \times \mathbb R^n$ and $\mathbb R^{n\times N}$.
\begin{lemma}\label{lem_intersection}
Let $\cV\subset \HR^N$ be an affine variety of dimension $n$ and degree $d$.
Except for a lower-dimensional subset of $\mathbb R^{n\times N}\times \mathbb R^N$, all affine linear $\cL_{A,b}=\{x\in\HR^{N} : Ax = b\}\subset \mathbb R^{N}$ intersect~$\cV$ in at most $d$ many points.

Let $\cV\subset \HP^{N-1}$ be a projective variety of dimension $n$ and degree $d$.
Except for a lower-dimensional subset of $\mathbb R^{n\times N}$, all linear subspaces $\cL_A=\{x\in\HP^{N-1} : Ax = 0\}\subset \mathbb{P}^{N-1}$ intersect $\cV$ in at most $d$ many points.
\end{lemma}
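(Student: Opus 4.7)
The plan is to deduce both parts from the definition of degree for complex projective varieties (recalled just before the lemma) combined with \cref{lemma8whitney}. I spell out the affine case first; the projective case is analogous. Complexify $\cV\subset\HR^N$ to $\cV_\HC\subset\HC^N$, which has complex dimension $n$ by \cref{lemma8whitney}, and let $\overline{\cV_\HC}\subset\HP^N_\HC$ denote its projective closure under the embedding $x\mapsto[1:x]$; this is a complex projective variety of dimension $n$ and, by definition, degree $d$. The definition of degree then supplies a lower-dimensional subvariety $\cW\subset\mathrm{G}(N-n,\HP^N_\HC)$ with $|\overline{\cV_\HC}\cap \cL|\leq d$ for every $\cL\in\mathrm{G}(N-n,\HP^N_\HC)$. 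For any $(A,b)\in\HR^{n\times N}\times\HR^n$ such that $[b\mid -A]$ has full row rank $n$, the projective closure $\widetilde{\cL}_{A,b}$ of $\cL_{A,b}$ belongs to $\mathrm{G}(N-n,\HP^N_\HC)$ and $|\cV\cap \cL_{A,b}|\leq |\overline{\cV_\HC}\cap \widetilde{\cL}_{A,b}|\leq d$ whenever $\widetilde{\cL}_{A,b}\notin \cW$, so the affine statement reduces to showing that the preimage of $\cW$ under $\mu:(A,b)\mapsto\widetilde{\cL}_{A,b}$, intersected with $\HR^{n\times N}\times\HR^n$, is a lower-dimensional subset of the real parameter space.

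The rational map $\mu:\HC^{n\times N}\times\HC^n\dashrightarrow\mathrm{G}(N-n,\HP^N_\HC)$ is regular on the Zariski open locus of full-rank $[b\mid -A]$, surjects onto the open subset of projective $(N-n)$-planes not contained in the hyperplane at infinity, and has fibres equal to the $\mathrm{GL}_n(\HC)$-orbits on this locus, each of dimension $n^2$. Fibre dimension theory then yields $\dim_\HC \mu^{-1}(\cW) < n^2 + \dim_\HC \mathrm{G}(N-n,\HP^N_\HC) = n(N+1) = \dim_\HC(\HC^{n\times N}\times\HC^n)$, so $\mu^{-1}(\cW)$ is a proper complex subvariety. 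Because $\cV$ is defined by real polynomials, $\overline{\cV_\HC}$ is invariant under complex conjugation; hence $\cW$ (being characterised intrinsically in terms of $\overline{\cV_\HC}$) is also conjugation-invariant and thus defined over $\HR$, as is the map $\mu$, so $\mu^{-1}(\cW)$ is cut out by polynomials with real coefficients. At least one such polynomial is nonzero, and a nonzero real polynomial does not vanish identically on $\HR^{n\times N}\times\HR^n$, so $\mu^{-1}(\cW)\cap(\HR^{n\times N}\times\HR^n)$ is contained in a proper real algebraic subvariety and is therefore lower-dimensional. Adjoining the (also lower-dimensional) locus where $[b\mid -A]$ drops rank completes the affine case.

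The projective assertion is proved identically, taking $\mu(A)=\ker A \in\mathrm{G}(N-n,\HP^{N-1}_\HC)$ and obtaining $\dim_\HC\mu^{-1}(\cW)<n^2+n(N-n)=nN=\dim_\HC\HC^{n\times N}$. The main subtle step is the real-versus-complex dimension comparison in the previous paragraph: \cref{lemma8whitney} goes in the opposite direction to what is needed, so rather than invoking it I work directly with real defining polynomials of $\mu^{-1}(\cW)$. The key technical input is that $\cW$ is defined over $\HR$, which I justify by observing that $\overline{\cV_\HC}$ is conjugation-stable and that the intersection count $|\overline{\cV_\HC}\cap\cL|$ is invariant under simultaneous conjugation of both factors.
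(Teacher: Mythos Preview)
Your argument is correct, but your remark that \cref{lemma8whitney} ``goes in the opposite direction to what is needed'' is mistaken, and the detour through conjugation-invariance of $\cW$ is unnecessary. The paper states the lemma as a direct consequence of Whitney's result, and here is why that suffices. Once you have produced the proper complex subvariety $Z:=\overline{\mu^{-1}(\cW)}\subset\HC^{n(N+1)}$, look at its set of real points $X:=Z\cap\HR^{n(N+1)}$. If $Z$ is cut out by complex polynomials $g_j$, then $X$ is cut out by the real and imaginary parts of the $g_j$, so $X$ is a real algebraic variety. Its complexification $X_\HC$ is by definition the \emph{smallest} complex variety containing $X$; since $Z$ is one such variety, $X_\HC\subset Z$. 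Then \cref{lemma8whitney} gives
\[
\dim_\HR X \;=\; \dim_\HC X_\HC \;\leq\; \dim_\HC Z \;<\; n(N+1),
\]
which is precisely the conclusion you want. No knowledge that $\cW$ itself is defined over~$\HR$ is needed.

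Your route is still valid: conjugation-invariance of $\overline{\cV_\HC}$ does force $\cW$, and hence $\mu^{-1}(\cW)$, to be defined over $\HR$, after which a nonzero real defining polynomial gives the dimension drop. This avoids Whitney's lemma at the cost of an (elementary) descent step; the paper's route is shorter because \cref{lemma8whitney} already packages the real-versus-complex dimension comparison you need. One minor slip: the sentence asserting $|\overline{\cV_\HC}\cap\cL|\leq d$ for \emph{every} $\cL\in\mathrm{G}(N-n,\HP^N_\HC)$ is false when the intersection is positive-dimensional; you only use the bound for $\cL\notin\cW$, so phrase it that way.
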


\subsection{The coarea formula}
The \emph{coarea formula of integration} is a key ingredient in the proof of the main Theorems. This formula says how integrals transform under smooth maps. A well known special case is integration by substitution. The coarea formula generalizes this from integrals defined on the real line to integrals defined on differentiable manifolds.

Let $\cM,\cN$ be Riemannian manifolds and $\d v$, $\d w$ be the respective volume forms. Furthermore, let $h:\cM\to \cN$ be a smooth map. A point $v\in \cM$ is called a \emph{regular point of $h$} if $\deriv{h}{v}$ is surjective. Note that a necessary condition for regular points to exist is $\dim \cM \geq \dim \cN$.

For any $v\in \cM$ the Riemannian metric on $\cM$ defines orthogonality on~$\Tang{v}{\cM}$. For a regular point~$v$ of $h$ this implies that the restriction of $\deriv{h}{v}$ to the orthogonal complement of its kernel is a linear isomorphism. The absolute value of the determinant of that isomorphism is the \emph{normal Jacobian of~$h$ at $v$}. Let us summarize this in a definition.
\begin{definition}
Let $h:\cM\to \cN$ be a smooth map and $v\in \cM$ be a regular point of $h$. Let~$(\,\cdot\,)^\perp$ denote the orthogonal complement. The normal Jacobian of $h$ at $v$ is defined
$$\mathrm{NJ}(h,v) := \left\vert\det\left(\restr{\deriv{h}{v}}{(\ker \deriv{h}{v})^\perp}\right)\right\vert.$$
\end{definition}
We also need the following theorem (see, e.g., \cite[Theorem A.9]{condition}).
\begin{theorem}\label{A9}
Let $\cM,\cN$ be smooth manifolds with $\dim \cM \geq \dim \cN$ and let $h : \cM \to \cN$ be a smooth map. Let $w\in \cN$ be such that all $v\in h^{-1}(w)$ are regular points of $h$. Then, the fiber $h^{-1}(w)$ over $w$ is a smooth submanifold of $\cM$ of dimension $\dim \cM - \dim \cN$ and the tangent space of~$h^{-1}(w)$ at $v$ is $\Tang{v}{h^{-1}(w)} = \ker \deriv{h}{v}$.
\end{theorem}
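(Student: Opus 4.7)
The plan is to reduce the statement to a purely local question at each $v\in h^{-1}(w)$, and then apply the inverse function theorem via the standard ``local submersion'' trick. Once the local picture is set up, the tangent space identification follows from a dimension count combined with a one-line chain-rule argument.

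First I would fix $v\in h^{-1}(w)$ and set $m=\dim\cM$, $n=\dim\cN$. Choose smooth charts $\phi\colon U\to\HR^m$ around $v$ with $\phi(v)=0$, and $\psi\colon V\to\HR^n$ around $w$ with $\psi(w)=0$, so that $\tilde h \coloneqq \psi\circ h\circ \phi^{-1}$ is a smooth map defined near $0\in\HR^m$. By hypothesis $v$ is a regular point, hence $\deriv{\tilde h}{0}\colon\HR^m\to\HR^n$ is surjective. After a linear change of coordinates in the source, I may assume $\deriv{\tilde h}{0}$ equals the projection $(x_1,\ldots,x_m)\mapsto(x_1,\ldots,x_n)$. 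Then the auxiliary map
\[
\Phi(x_1,\ldots,x_m)\;=\;\bigl(\tilde h(x_1,\ldots,x_m),\,x_{n+1},\ldots,x_m\bigr)
\]
satisfies $\deriv{\Phi}{0}=\mathrm{Id}$, so the inverse function theorem makes $\Phi$ a diffeomorphism onto a neighborhood of $0$ in $\HR^m$. In the coordinates given by $\Phi$, the map $\tilde h$ is simply the projection onto the first $n$ coordinates, and therefore the fiber $h^{-1}(w)$ intersected with the chart domain corresponds to $\{0\}\times\HR^{m-n}$. This is a submanifold chart of dimension $m-n$.

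Since this construction works at every $v\in h^{-1}(w)$, a standard gluing argument promotes the local charts to a smooth $(m-n)$-dimensional submanifold structure on $h^{-1}(w)$. For the tangent space, I would argue: any smooth curve $\gamma\colon(-\varepsilon,\varepsilon)\to h^{-1}(w)$ with $\gamma(0)=v$ satisfies $h\circ\gamma\equiv w$, so the chain rule gives $\deriv{h}{v}(\gamma'(0))=0$. This shows $\Tang{v}{h^{-1}(w)}\subseteq\ker\deriv{h}{v}$. Surjectivity of $\deriv{h}{v}$ and rank–nullity give $\dim\ker\deriv{h}{v}=m-n=\dim\Tang{v}{h^{-1}(w)}$, so the inclusion is an equality.

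There is no real obstacle here: the result is the preimage theorem, and the only subtle point is the linear-algebra normalization that lets one write $\deriv{\tilde h}{0}$ as a coordinate projection, after which $\Phi$ straightens $\tilde h$ into a projection. Everything else is book-keeping (checking that the local charts are compatible on overlaps, which is automatic since they are restrictions of ambient charts on $\cM$) and a chain-rule computation for the tangent space. If anything were to require care, it would be making sure the chosen charts remain valid on a small enough neighborhood where $\Phi$ is a diffeomorphism; standard shrinking suffices.
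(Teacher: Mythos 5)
Your argument is correct and complete: it is the standard proof of the preimage (regular value) theorem, namely the local submersion normal form obtained from the inverse function theorem, followed by the chain-rule inclusion $\Tang{v}{h^{-1}(w)}\subseteq\ker\deriv{h}{v}$ and a rank--nullity dimension count to get equality. Note that the paper does not prove this statement at all; it simply cites it from the literature (Theorem A.9 of the book on condition numbers), so your proposal supplies a self-contained proof where the paper relies on a reference, and the route you take is exactly the classical one used there. The only cosmetic remark is that the case $h^{-1}(w)=\emptyset$ is vacuous and the hypothesis ``all points of the fiber are regular'' is precisely the definition of $w$ being a regular value, so no extra generality needs handling.
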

A point $w\in \cN$ satisfying the properties in the previous theorem is called \emph{regular value of $h$}.
By \emph{Sard's lemma} the set of all $w\in \cN$ that are not a regular value of $h$ is a set of measure zero. We are now equipped with all we need to state the coarea formula. See \cite[(A-2)]{howard} for a proof.
\begin{theorem}[The coarea formula of integration]\label{coarea}
Suppose that $\cM, \cN$ are Riemannian manifolds, and let $h:\cM\to \cN$ be a surjective smooth map. Then we have for any function $a: \cM \to \HR$ that is integrable with respect to the volume measure of $\cM$ that
$$\int_{v\in \cM} a(v) \;\d v  = \int_{w\in \cN} \left(\int_{u\in h^{-1}(w)}\, \frac{a(u)}{\mathrm{NJ}(h,u)}\;\d u \right) \,\d w,$$
where $\d u$ is the volume form on the submanifold $h^{-1}(w)$.
\end{theorem}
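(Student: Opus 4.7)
The plan is to prove the coarea formula by localizing via a partition of unity, choosing adapted coordinates in which $h$ becomes a coordinate projection, and then reducing the identity to Fubini's theorem in Euclidean space. Throughout I write $m = \dim \cM$ and $n = \dim \cN$; if $m < n$ the set of regular points is empty and the surjective image has measure zero, so the interesting case is $m \geq n$.

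First I would dispose of the critical set. Let $C \subset \cM$ denote the closed set of critical points of $h$. By Sard's lemma, $h(C)$ has measure zero in $\cN$, and by \cref{A9} every fibre $h^{-1}(w)$ with $w \notin h(C)$ consists entirely of regular points, so the inner integrand $a/\mathrm{NJ}(h,\cdot)$ is well defined on $h^{-1}(w)$ for almost every $w$. One also verifies that the contribution of $C$ to the left-hand side vanishes: $C$ stratifies into countably many smooth pieces on each of which $h$ has constant rank strictly less than $n$, and the local normal form of a constant-rank map together with Fubini shows that each stratum contributes zero to both sides. Thus it suffices to prove the formula for $a$ supported in the open set $\cM^{\circ} := \cM \setminus C$ of regular points.

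Second, I would invoke a partition of unity subordinate to a cover of $\cM^{\circ}$ by charts $(U_i,\phi_i)$ small enough that the submersion theorem produces coordinates $(x_1,\dots,x_m)$ on $U_i$ and $(y_1,\dots,y_n)$ on $h(U_i)\subset\cN$ in which $h$ is the projection $(x_1,\dots,x_m)\mapsto(x_1,\dots,x_n)$. By linearity of both sides it is enough to prove the identity for $a$ supported in a single $U_i$. In the chart, let $G(x)$ be the Gram matrix of the metric on $\cM$ and $H(y)$ the Gram matrix on $\cN$. Splitting $G$ into blocks with respect to the decomposition $(x_1,\dots,x_n)\oplus(x_{n+1},\dots,x_m)$, the induced Riemannian volume on the fibre $h^{-1}(y)\cap U_i$ has density $\sqrt{\det G_{\mathrm{fib}}(x)}$ in the fibre coordinates, where $G_{\mathrm{fib}}$ is the lower-right $(m-n)\times(m-n)$ block.

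The heart of the proof is then the pointwise identity
\[
\mathrm{NJ}(h,v)^{2} \;=\; \frac{\det H(y)\,\det G_{\mathrm{fib}}(x)}{\det G(x)},
\]
which follows from a Schur-complement calculation: lifting $\partial_{x_1},\dots,\partial_{x_n}$ to vectors $u_1,\dots,u_n$ in $(\ker\deriv{h}{v})^{\perp}$ by subtracting their $G$-orthogonal projections onto the fibre directions, one finds that $\deriv{h}{v}$ sends $u_i$ to $\partial_{y_i}$, so the matrix of $\deriv{h}{v}|_{(\ker)^{\perp}}$ in these bases is the identity; converting to orthonormal bases via the Gram matrices of $u_{\bullet}$ and $\partial_{y_{\bullet}}$ produces the stated identity, using $\det G = \det G_{\mathrm{fib}}\cdot\det(\text{Schur complement of }G_{\mathrm{fib}})$. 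Substituting this into the right-hand side and applying Fubini to the coordinate expression
\[
\int \sqrt{\det H(y)}\left(\int \frac{a(x)\sqrt{\det G_{\mathrm{fib}}(x)}}{\mathrm{NJ}(h,v)}\,dx_{n+1}\cdots dx_m\right)dy \;=\; \int a(x)\sqrt{\det G(x)}\,dx
\]
completes the argument in a single chart. The main obstacle is the Schur-complement identity: although elementary, one must decompose $\Tang{v}{\cM}$ orthogonally with respect to the metric rather than the coordinate directions, and keep careful track of which bases the determinants are computed in; once it is in hand, everything else is routine localization and Fubini.
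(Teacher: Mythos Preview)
The paper does not prove this theorem at all; it simply cites \cite[(A-2)]{howard}. So there is no ``paper's own proof'' to compare against, and your proposal is a self-contained argument.

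Your local computation is the standard one and is correct: once $h$ is the coordinate projection $(x_1,\dots,x_m)\mapsto(x_1,\dots,x_n)$, the Schur-complement identity
\[
\mathrm{NJ}(h,v)^2=\frac{\det H(y)\,\det G_{\mathrm{fib}}(x)}{\det G(x)}
\]
is exactly what makes the two iterated integrals match via Fubini. That part is fine.

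The gap is in your treatment of the critical set $C$. You assert that $C$ ``stratifies into countably many smooth pieces on each of which $h$ has constant rank'' and that each stratum contributes zero to the left-hand side. Neither claim is justified. The set $\{v:\operatorname{rank}\deriv{h}{v}<n\}$ is in general only closed, not a countable union of constant-rank manifolds, and more importantly it can have \emph{positive} measure in $\cM$: take any smooth $h:\HR\to\HR$ that is constant on an interval and surjective. For such $h$, $\int_C a\,\d v\neq 0$ for generic $a$, while the right-hand side of the formula does not see $C$ at all (since for a.e.\ $w$ the fibre $h^{-1}(w)$ misses $C$). Thus the identity, as literally stated with $1/\mathrm{NJ}$ on the right, actually \emph{fails} unless $C$ has measure zero. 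The version that holds unconditionally is the one with $\mathrm{NJ}$ as a factor on the left; the paper's phrasing tacitly assumes $C$ is null, which is true in every application made of \cref{coarea} in the paper. If you add that hypothesis explicitly, your argument goes through; without it, the reduction to $\cM^\circ$ is invalid.
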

The following corollary from the coarea formula is important.
\begin{corollary}\label{coarea-densities}
Let $h\colon \cM\to\cN$ be a smooth surjective map of Riemannian manifolds.

(1) Let $X$ be a random variable on $\cM$ with density $\beta$. Then $h(X)$ is a random variable on $\cN$ with density
\[
	\gamma(y) = \int_{x\in h^{-1}(y)} \frac{\beta(x)}{\mathrm{NJ}(h,x)} \,\d x.
\]

(2) Let $\psi$ be a density on $\cN$ and for all $y\in \cN$, let $\rho_y$ be a density on $h^{-1}(y)$. The random variable $X$ on $\cM$ obtained by independently taking $Y\in \cN$ with density $\psi$ and $X\in h^{-1}(Y)$ with density $\rho_Y$ has density
 \[
 	\beta(x) = \psi(h(x))\rho_{h(x)}(x)\mathrm{NJ}(h,x).
 \]
\end{corollary}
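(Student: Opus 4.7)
My plan is to derive both statements directly from the coarea formula (\cref{coarea}) by testing the candidate densities against indicator functions of arbitrary measurable sets, since a density on a Riemannian manifold is determined by the integrals it assigns to such sets.

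For part~(1), I would fix a measurable $B \subseteq \cN$ and begin from the pushforward identity
$\Prob(h(X) \in B) = \Prob(X \in h^{-1}(B)) = \int_\cM \mathbf{1}_{h^{-1}(B)}(x)\,\beta(x)\,\d x$,
and then apply \cref{coarea} to the integrand $a(x) = \mathbf{1}_{h^{-1}(B)}(x)\,\beta(x)$. The key observation is that $\mathbf{1}_{h^{-1}(B)}(x) = \mathbf{1}_B(h(x))$ is constant on each fiber of $h$, so it factors out of the inner fiber integral to yield
$\int_{\cN}\mathbf{1}_B(y)\bigl(\int_{h^{-1}(y)}\beta(x)/\mathrm{NJ}(h,x)\,\d x\bigr)\,\d y = \int_B\gamma(y)\,\d y$.
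Since $B$ was arbitrary, this identifies $\gamma$ as the density of $h(X)$.

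For part~(2), I would check that $\beta(x) = \psi(h(x))\rho_{h(x)}(x)\mathrm{NJ}(h,x)$ is the density of $X$ by verifying $\Prob(X \in A) = \int_A \beta(x)\,\d x$ for every measurable $A \subseteq \cM$. On the probability side, the construction of $X$ together with the law of total probability gives
$\Prob(X\in A) = \int_\cN \psi(y)\int_{h^{-1}(y)\cap A}\rho_y(x)\,\d x\,\d y$.
On the integral side, applying \cref{coarea} to $a(x) = \mathbf{1}_A(x)\psi(h(x))\rho_{h(x)}(x)\mathrm{NJ}(h,x)$ makes the $\mathrm{NJ}(h,x)$ factor cancel against the $1/\mathrm{NJ}(h,x)$ coming from the coarea formula, leaving exactly the same iterated integral; pulling the fiber-constant $\psi(h(x)) = \psi(y)$ outside the inner integral matches the two expressions term for term.

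The one delicate point I expect to encounter is the set of critical values of $h$ (and their preimages), on which $\mathrm{NJ}(h,x)$ vanishes and the integrand $\beta(x)/\mathrm{NJ}(h,x)$ in part~(1) is not literally defined. By Sard's lemma the critical values form a measure-zero subset of $\cN$, so the outer $\d y$-integral ignores them; restricting all manipulations to the open submanifold of regular values and interpreting the fiber integrals as $0$ on the exceptional set handles this cleanly. Once this technicality is dispensed with, the algebraic cancellation of $\mathrm{NJ}$ makes part~(2) essentially immediate, and no estimates beyond the hypotheses of the coarea formula are needed.
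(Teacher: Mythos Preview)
Your proposal is correct and follows essentially the same approach as the paper: both parts are obtained by applying the coarea formula to indicator functions of measurable sets, with the $\mathrm{NJ}$ factor cancelling in part~(2). The paper's proof is considerably terser (it simply asserts part~(1) ``follows directly from the coarea formula'' and writes down the one-line chain of equalities for part~(2)), so your version is more detailed, including the Sard's-lemma remark, but not different in substance.
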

\begin{proof}
The first part follows directly from the coarea formula. For the second part,
it suffices to note that for measurable $\cU\subset \cM$ we have
\begin{align*}
	\int_{y\in h(\cU)}\int_{x\in h^{-1}(y)\cap \cU}\psi(h(x))\rho_{h(x)}(x)\,\d x \d y
	&= \int_{y\in h(\cU)}\int_{x\in h^{-1}(y)\cap \cU}\\ \frac{\beta(x)}{\mathrm{NJ}(h,x)}\,\d x \,\d y
	&= \int_{x\in\cU}\beta(x)\,\d x;
\end{align*}
see also \cite[Remark 17.11]{condition}.
\end{proof}

\subsection{Sampling from the density on affine-linear subspaces}\label{sampling_psi}
Our method for sampling from an algebraic manifold involves taking a distribution $\varphi$ on the parameter space $\mathbb R^{n\times N} \times \mathbb R^n$ of hyperplanes of the right dimension which is easy to sample, and turning it into another density $\psi$. In this section, we explain how to sample from $\psi$ with rejection sampling. In the following we denote elements of $\mathbb R^{n\times N} \times \mathbb R^n$ by $(A,b)$.

\begin{proposition}\label{sample_psi}
Let $\kappa$ be any number satisfying
$0< \kappa\cdot \sup_{(A,b)
}\overline f(A,b) \leq  1.$
Consider the binary random $Z\in\set{0,1}$ with
  $\Prob \{Z = 1 \mid (A,b)\} = \kappa\,\overline f(A,b).$
Then, $\psi$ is the density of the conditional random variable $((A,b)\mid Z=1)$.
\end{proposition}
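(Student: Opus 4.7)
The plan is to unwind the definition of the conditional density via Bayes' rule for continuous random variables. First I verify that the setup makes sense as a valid conditional probability. The hypothesis $0 < \kappa \cdot \sup_{(A,b)}\overline{f}(A,b) \leq 1$ combined with the fact that $\overline{f} \geq 0$ (since $f \geq 0$ by the standing assumption of Theorem~\ref{theorem1}(2)) ensures $\kappa \overline{f}(A,b) \in [0,1]$ for all $(A,b)$, so it is indeed a legitimate conditional probability for the Bernoulli variable $Z$. The positivity of $\kappa$ together with $\int_\cM f > 0$ and Theorem~\ref{theorem1}(1) further guarantees that the normalizing constant $\mean_\varphi(\overline{f})$ is strictly positive and finite.

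Next I compute the marginal probability $\Prob\{Z=1\}$ via the law of total probability:
\[
\Prob\{Z=1\} = \int_{\HR^{n\times N}\times\HR^n} \varphi(A,b)\,\Prob\{Z=1\mid(A,b)\}\,\d(A,b) = \kappa\,\mean_{\varphi}(\overline{f}).
\]

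Then I apply Bayes' rule to obtain the conditional density of $(A,b)$ given $Z=1$:
\[
p_{(A,b)\mid Z=1}(A,b) = \frac{\varphi(A,b)\,\Prob\{Z=1\mid(A,b)\}}{\Prob\{Z=1\}} = \frac{\varphi(A,b)\,\kappa\,\overline{f}(A,b)}{\kappa\,\mean_{\varphi}(\overline{f})} = \frac{\varphi(A,b)\,\overline{f}(A,b)}{\mean_{\varphi}(\overline{f})},
\]
which is precisely $\psi(A,b)$ as defined in the statement of Theorem~\ref{theorem1}(2). The factor $\kappa$ cancels, which also explains why any admissible choice of $\kappa$ yields the same conditional distribution.

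There is no real obstacle here; the argument is a direct Bayes' rule computation. The only points meriting a sentence of care are (i) justifying that $Z$ is well-defined as a Bernoulli variable (the bound on $\kappa$), and (ii) that $\mean_\varphi(\overline{f})$ is positive and finite so that division is legitimate, both of which follow immediately from the hypotheses of Theorem~\ref{theorem1}(2) and part (1) of the same theorem.
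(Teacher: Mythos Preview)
Your proof is correct and follows essentially the same route as the paper: both compute the conditional density via Bayes' rule, identifying $\Prob\{Z=1\}=\kappa\,\mean_\varphi(\overline f)$ and cancelling the factor $\kappa$. You add a little extra care in checking that $Z$ is a well-defined Bernoulli variable and that the normalizing constant is positive and finite, but the core argument is identical.
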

\begin{proof}
We denote the density of the conditional random variable $((A,b)\mid Z=1)$ by $\lambda$. Bayes' Theorem implies
$\lambda(A,b)\,\Prob\{Z=1\} = \Prob \{Z = 1 \mid (A,b)\} \, \varphi(A,b),$
which, by assumption, is equivalent to
$$ \lambda(A,b) = \frac{\kappa\, \overline f(A,b)\,\varphi(A,b)}{\Prob\{Z=1\}} .$$
By the definition of $Z$ we have
  $\Prob\{Z=1\}  = \kappa \,\mean_{(A,b)\sim\varphi}\,\overline f(A,b)$.
Hence, $$\lambda(A,b) = \frac{\overline f(A,b)\,\varphi(A,b)}{\mean_{(A,b)\sim\varphi}\,\overline f(A,b)} = \psi(A,b).$$
This finishes the proof.
\end{proof}
\cref{sample_psi} shows that $\psi$ is the density of a conditional distribution. A way to sample from such distributions is by \emph{rejection sampling}: for sampling $((A,b) \mid Z=1)$ we may sample from the joint distribution $(A,b,Z)$ and then keep only the points with $Z=1$. The strong law of large numbers implies the correctness of rejection sampling. Indeed, if $(A_i,b_i,Z_i)$ is a sequence of i.i.d.\ copies of $(A,b,Z)$ and $\mathcal U$ is a measurable set with respect to the Lebesgue measure on $\mathbb R^{n\times N}\times \mathbb R^{n}$, then we have
\begin{align*}
	\frac{\#\{i\mid (A_i,b_i)\in \cU, Z_i = z, i\leq n\}}{\#\{i\mid Z_i = z, i\leq n\}}
	&= \frac{
		{\frac{1}{n}\,\#\{i\mid (A_i,b_i)\in \cU, Z_i = z, i\leq n\}}
	} {
		{\frac{1}{n}\,\#\{i\mid Z_i = z, i\leq n\}}
	}
	\\ &\xrightarrow{\text{a.s.\footnotemark}}
	\frac{\Prob\{(A,b)\in \cU, Z=z\}}{\Prob\{Z=z\}}
	\\ &= \Prob_{(A,b)|Z=z}(\cU).
\end{align*}

For sampling $Z$, however, we must compute a suitable $\kappa$. This can be done as follows. Let $d$ be the degree of the ambient variety of $\mathcal M$. We assume we know upper bounds $K$ for $f(x)$ and $C$ for $\Vert x\Vert^2$, both as $x$ ranges over $\mathcal M$, and set
\[
 \kappa = \frac{1}{dK} \frac{\Gamma(\frac{n+1}{2})}{\sqrt{\pi}^{\, n+1}}\frac{1}{(1+C)^\frac{n+1}{2}}.
\]
Then we have $0 < \kappa \overline f(A,b)\leq 1$ for all $(A,b)$ as needed. With $\kappa$, we have everything we need to carry out the sampling method.

How to obtain the upper bounds $K$ and $C$? For $K$, we might just know the maximum of~$f$. For example, if we want to sample from the uniform distribution, then we may use $f = 1$. In more complicated cases, we could approximate $\max f$ by repeatedly sampling $(A,b)\sim \varphi$ and recording the highest value $f$ takes on the points in the intersection $\mathcal M\cap \mathcal L_{(A,b)}$. Casella and Robert \cite{MCSM} call this approach \emph{stochastic exploration}.

We might know $C$ a priori, for example because we restrict the manifold $\mathcal M$ to a box in $\mathbb R^N$. We can also restrict the manifold to a box after determining by sampling what the size of the box should be. We could also estimate $\max \Vert x\Vert^2$ by sampling as for $\max f$. Sometimes we can also use
\emph{semidefinite-programming} \cite{sos} to bound polynomial functions like~$\Vert x\Vert^2$ on a variety. Note that the probability for rejection increases as $C$ increases. We thus seek to find a $C$ which is as small as possible. If our given function $f$ is invariant under translation, we may translate $\cM$ to decrease $C$. For instance, sampling from the uniform distribution on the circle $(x_1-100)^2 + (x_2-100)^2 = 1$ is the same as sampling on $x_1^2+x_1^2=1$ and then translating by adding $(100,100)$ to each sample point. The difference between the two is that for the first variety we need $C=101$, whereas for the second we can use $C=1$.

\footnotetext{almost surely}

\subsection{Sampling linear spaces in explicit form}

Sometimes it is useful to sample the linear space $\cL_{A,b}$ in explicit form, and not in implicit form $Ax=b$. For instance, if $\cV$ is a hypersurface given by an equation $F(x)=0$, then intersecting $\cV$ with a line $u+tv$ can be done by solving the univariate equation $F(u+tv)=0$. The next lemma shows how to pass from implicit to explicit representation in the Gaussian case.
\begin{lemma}
Let $\cL_{A,b}=\{x\in\HR^N \mid Ax=b\}$ be a random affine linear space given by i.i.d.\ standard Gaussian entries for $A\in\HR^{n\times N}$ and $b\in\HR^{n}$. Consider another random linear space
$$\cK_{u,v_1,\ldots, v_{N-m}} =\{ u+t_1v_1+\cdots t_{N-n}v_{N-n} \mid t_1,\ldots,t_{N-n}\in\HR\},$$
where $u,v_1,\ldots, v_{N-m}$ are obtained as follows. Sample a matrix $U\in\HR^{(N-n+1)\times (N+1)}$ with i.i.d.\ standard Gaussian entries, and let\enlargethispage{\baselineskip}
$$\begin{pmatrix} u \\ 1\end{pmatrix},\;   \begin{pmatrix} v_1 \\ 0\end{pmatrix},\;\ldots,\;\begin{pmatrix} v_{N-n} \\ 1\end{pmatrix}\in\mathrm{rowspan}(U).$$
Then, we have $\cK_{u,v_1,\ldots, v_{N-m}}\sim \cL_{A,b}$.
\end{lemma}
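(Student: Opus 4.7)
My plan is to show that both $\cL_{A,b}$ and $\cK_{u,v_1,\ldots,v_{N-n}}$ arise from the same uniform distribution on a Grassmannian via the standard homogenization of affine spaces. To any $(N-n)$-dimensional affine subspace $\cL\subset \mathbb R^N$ associate the $(N-n+1)$-dimensional linear subspace
\[
\widetilde{\cL} \;:=\; \mathrm{span}\{(x,1)\in\HR^{N+1} : x\in\cL\}\;\subset\;\HR^{N+1}.
\]
Almost surely one recovers $\cL$ from $\widetilde{\cL}$ as $\{x\in\HR^N : (x,1)\in\widetilde{\cL}\}$, so it suffices to prove that $\widetilde{\cL_{A,b}}$ and $\widetilde{\cK}$ have the same distribution on the Grassmannian $\mathrm G(N-n+1,\HR^{N+1})$.

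First I would identify the two homogenizations explicitly. Since $Ax=b$ is equivalent to $[A\mid -b]\bigl(\begin{smallmatrix}x\\1\end{smallmatrix}\bigr)=0$, the lift of $\cL_{A,b}$ is
\[
\widetilde{\cL_{A,b}} \;=\; \ker\, M, \qquad M:=[A\mid -b]\in\HR^{n\times (N+1)}.
\]
Because $-b$ has the same law as $b$, the entries of $M$ are i.i.d.\ standard Gaussian. For the second space, the vectors $(u,1),(v_1,0),\ldots,(v_{N-n},0)$ chosen from $\mathrm{rowspan}(U)$ are, almost surely, $N-n+1$ linearly independent elements of an $(N-n+1)$-dimensional space, hence they form a basis. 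This shows $\widetilde{\cK} = \mathrm{rowspan}(U)$ almost surely. It also shows that the affine span $\cK$ depends only on $\mathrm{rowspan}(U)$ and not on the particular choice of basis vectors $(u,1),(v_i,0)$.

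Next I invoke orthogonal invariance. If $Q\in O(N+1)$ is any orthogonal matrix, then $MQ$ has the same distribution as $M$ because standard Gaussian matrices are invariant under right multiplication by orthogonal matrices, and $\ker(MQ)=Q^{-1}\ker(M)$. Hence $\widetilde{\cL_{A,b}}$ is $O(N+1)$-invariant. The same argument applied to $U$ shows that $\mathrm{rowspan}(UQ) = Q^{-1}\mathrm{rowspan}(U)$ has the same law as $\mathrm{rowspan}(U)$, so $\widetilde{\cK}$ is $O(N+1)$-invariant as well. Since there is a unique $O(N+1)$-invariant probability measure on $\mathrm G(N-n+1,\HR^{N+1})$, the two distributions coincide.

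The one delicate point is verifying that the map $\widetilde{\cL}\mapsto \cL$ is well-defined on a set of full measure, i.e.\ that almost surely $\widetilde{\cL}\not\subset\{z_{N+1}=0\}$ and that the intersection $\widetilde{\cL}\cap\{z_{N+1}=1\}$ is a genuine affine subspace. For $\widetilde{\cL_{A,b}}$ this is the statement that, with probability one, $A$ has full row rank and $b\in\mathrm{im}(A)$, both of which hold for Gaussian $(A,b)$. For $\widetilde{\cK}$, it is the statement that with probability one the last column of $U$ is not in the span of the others, which again holds almost surely. Everything else is a straightforward calculation, so I expect no serious obstacle beyond keeping the dimensions and the identification of $\mathrm{rowspan}$ versus $\ker$ consistent.
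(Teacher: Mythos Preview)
Your proposal is correct and follows essentially the same route as the paper: homogenize both affine spaces to $(N-n+1)$-dimensional linear subspaces of $\HR^{N+1}$, identify $\widetilde{\cL_{A,b}}=\ker[A,-b]$ and $\widetilde{\cK}=\mathrm{rowspan}(U)$, and conclude equality in law from the uniqueness of the $O(N+1)$-invariant probability measure on the Grassmannian (the paper cites Leichtweiss for this). Your write-up is in fact more careful than the paper's, which does not spell out the almost-sure well-definedness of the dehomogenization step.
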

\begin{proof}
Consider the linear space $\widetilde\cL_{A,b}:=\{z\in\HR^{N+1} \mid [A, -b]z=0\}.$ This is a random linear space in the Grassmannian $\mathrm{G}(N+1-n,\mathbb R^{N+1})$. The affine linear space is given as
$\cL_{A,b}:=\{u+t_1v_1+\cdots t_{N-n}v_{N-n}\},$ where
$$\begin{pmatrix} u \\ 1\end{pmatrix},\;   \begin{pmatrix} v_1 \\ 0\end{pmatrix},\;\ldots,\;\begin{pmatrix} v_{N-n} \\ 1\end{pmatrix}\in\mathrm{ker}([A, -b]).$$
Now the kernel of $[A, -b]$ is a random linear space in $\mathrm{G}(n,\mathbb R^{N+1})$, which is invariant under orthogonal transformations. By \cite{Leichtweiss} there is unique orthogonally invariant probability distribution
on the Grassmannian $\mathrm{G}(n, \HR^{N+1})$. Since $\mathrm{rowspan}(U)$ is also orthogonally invariant, we find that $\mathrm{rowspan}(U)\sim \mathrm{ker}([A, -b])$, which concludes the proof.
\end{proof}

\section{Proof of Theorem~\ref{theorem1}}\label{sec:proof_theorem1}

We begin by giving an alternate description of the function $\alpha(x)$.

\begin{lemma}\label{alpha_eq}
$
\alpha(x) = \int_{A\in \mathbb R^{n\times N}}
\varphi(A,Ax)|\det(A|_{\Tang{x}{\cM}})|\,\d A.
$
\end{lemma}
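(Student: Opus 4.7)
The plan is to reduce the right-hand side to a Gaussian integral in canonical form by a change of variables, then evaluate it explicitly. The idea is to exploit the orthogonal invariance of the standard Gaussian measure on $\mathbb{R}^{n\times N}\times\mathbb{R}^{n}$ to rotate coordinates so that $\mathrm{T}_x\cM$ becomes the span of the first $n$ standard basis vectors. Concretely, pick an orthonormal basis $u_1,\ldots,u_n$ of $\mathrm{T}_x\cM$ and $v_1,\ldots,v_{N-n}$ of $\mathrm{N}_x\cM$, assemble them into an orthogonal matrix $U$, and substitute $A\mapsto AU^T$. Since $\varphi_1(A)=(2\pi)^{-nN/2}e^{-\|A\|_F^2/2}$ is invariant and $|\det(A|_{\mathrm{T}_x\cM})|$ becomes $|\det B|$ where $B\in\mathbb{R}^{n\times n}$ is the first $n$ columns of (the new) $A$, the integral becomes one over block matrices $A=[B\mid C]$, with $Ax$ replaced by $By_T+Cy_N$ where $y_T=(u_i^T x)_i$ and $y_N=(v_j^T x)_j$. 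Note that $\|y_T\|^2+\|y_N\|^2=\|x\|^2$ and $\|y_N\|^2=\langle x,\Pi_{\mathrm{N}_x\cM}x\rangle$, which already matches the numerator of $\alpha(x)$.

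Next I would integrate out $C$ first. The $C$-integral factors over rows $c_i$, and each row integral is a completed-square Gaussian in $c_i$ with a linear term coming from $b_i\cdot y_T$. The standard one-dimensional-plus-orthogonal-complement split along $y_N/\|y_N\|$ yields the factor $(2\pi)^{(N-n)/2}(1+\|y_N\|^2)^{-1/2}\exp\!\bigl(-\tfrac{(b_i\cdot y_T)^2}{2(1+\|y_N\|^2)}\bigr)$. Combining all $n$ rows turns the remaining $B$-integrand into $|\det B|\exp(-\tfrac{1}{2}\sum_i b_i^T Q b_i)$, where $Q=I_n+\tfrac{y_T y_T^T}{1+\|y_N\|^2}$ is a rank-one perturbation of the identity.

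Then substitute $b_i=Q^{-1/2}\xi_i$ in the $B$-integral. This reduces the problem to evaluating $\mathbb{E}\,|\det G|$ for a standard $n\times n$ Gaussian matrix $G$, times a Jacobian factor $(\det Q)^{-(n+1)/2}$. Using the Bartlett/QR-decomposition one has the classical identity $\mathbb{E}\,|\det G|=2^{n/2}\Gamma\!\bigl(\tfrac{n+1}{2}\bigr)/\sqrt{\pi}$; this is the one outside input I will cite. The matrix-determinant lemma gives $\det Q=(1+\|y\|^2)/(1+\|y_N\|^2)$.

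Finally, collecting all the normalization constants $(2\pi)^{-n(N+1)/2}$ against the factors $(2\pi)^{n(N-n)/2}$ from the $C$-integral and $(2\pi)^{n^2/2}$ from the $B$-integral, everything collapses to $\pi^{-(n+1)/2}\Gamma(\tfrac{n+1}{2})\cdot (1+\|y_N\|^2)^{1/2}(1+\|y\|^2)^{-(n+1)/2}$, which is precisely $\alpha(x)$. The main obstacle is purely bookkeeping: keeping track of all Gaussian normalization constants, the Jacobian of the $\xi_i$-substitution, and the power of $\det Q$ so that the exponents combine correctly; no genuinely difficult step beyond the evaluation of $\mathbb{E}\,|\det G|$ enters the argument.
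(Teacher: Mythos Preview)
Your proof is correct, and the bookkeeping you outline checks out: the $C$-integral yields $(2\pi)^{(N-n)/2}(1+\|y_N\|^2)^{-1/2}\exp\bigl(-\tfrac{(b_i\cdot y_T)^2}{2(1+\|y_N\|^2)}\bigr)$ per row, the substitution $b_i=Q^{-1/2}\xi_i$ produces the factor $(\det Q)^{-(n+1)/2}$, and the constants collapse to $\pi^{-(n+1)/2}\Gamma(\tfrac{n+1}{2})$ as claimed.

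Your route differs from the paper's in a meaningful way. The paper rotates so that $x$ lies along a single coordinate axis; this makes $\varphi(A,Ax)$ equal to a rescaled Gaussian $\phi(AR)$ with $R$ diagonal, and after the change $B=AR$ the whole integral becomes $\mathbb{E}_B|\det(BM)|$ for a fixed matrix $M=R^{-1}W$. They then invoke Wishart theory (Muirhead) to evaluate this for general $M$ and compute $\det(M^TM)$ via a rank-one argument. You instead rotate so that $\Tang{x}{\cM}$ spans the first $n$ coordinates, which makes $|\det(A|_{\Tang{x}{\cM}})|$ trivial but leaves the $b$-dependence inside $\varphi$; you handle that by explicitly integrating out the normal block $C$ and completing the square. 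The trade-off: your argument is longer and more hands-on, but it is more self-contained---the only black box you need is $\mathbb{E}|\det G|$ for a \emph{standard} Gaussian matrix, which follows directly from Bartlett/QR, whereas the paper relies on the more general Wishart expectation formula. Both arrive at the same rank-one determinant computation $\det Q=(1+\|x\|^2)/(1+\|y_N\|^2)$, just from different directions.
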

\begin{proof}
Let $\alpha'(x)$ be the right hand side of the formula. Let $U\in O(N)$ be an orthogonal matrix such that $Ux = (0,\dotsc,0,x_N)^T$ and consider the manifold $\mathcal N = U\cdot \mathcal M$. We have $\mathrm T_{Ux}\mathcal N = U \mathrm T_x \mathcal M$ and $\det(\restr{A}{\Tang{x}{\mathcal M}}) =
\det(\restr{AU^T}{\Tang{Ux}{\mathcal N}})$. After the change of variables $A\mapsto AU^T$ we get
\[
\alpha'(x) =
\int_{A\in\HR^{n\times N}}\, \left\vert\det(\restr{A}{\Tang{Ux}{\mathcal N}})\right\vert\;\varphi(A,AUx)\, \d A.
\]
By definition of the Gaussian density, we have $\varphi(A,AUx)=\frac{1}{(\sqrt{2\pi})^n}\phi(AR)$,
where $\phi$ is the Gaussian density on $\mathbb R^{n\times N}$ and $R=\operatorname{diag}(1,\dotsc,1,\sqrt{1+x_N^2}) \in \mathbb R^{N\times N}$.
Let us write $B = AR$. A change of variables from $A$ to $B$ yields
$$\alpha'(x)=
\frac{1}{(1+x_N^2)^\frac{n}{2}\,(\sqrt{2\pi})^{n}}
\int_{B\in\HR^{n\times N}}\,
\left\vert\det(\restr{BR^{-1}}{\Tang{Ux}{\cN}})\right\vert\;
\phi(B)\, \d B.$$
Let $W\in \mathbb{R}^{N\times n}$ be a matrix whose columns form an orthonormal basis for $\Tang{Ux}{\cN}$ and write $M:= R^{-1}W$. Then we have $\det(\restr{BR^{-1}}{\Tang{Ux}{\cN}}) = \det(BM)$ and so
$$\alpha'(x)= \frac{1}{(1+x_N^2)^\frac{n}{2}\,\sqrt{2\pi}^{\,n}}\,\mean_{B\sim \phi} \left\vert\det(BM)\right\vert.$$
We now write $\mean_{B\sim \phi} \left\vert\det(BM)\right\vert = \mean_{B\sim \phi} \det(M^TB^TBM)^\frac{1}{2}$. By \cite[Theorem 3.2.5]{muirhead} the matrix $C:=M^TB^TBM \in \HR^{n\times n}$ is a \emph{Wishart matrix} with covariance matrix $M^TM$.
By \cite[Theorem 3.2.15]{muirhead}, we have $\mean\det(C)^\frac{1}{2} = \det(M^TM)^\frac{1}{2} \tfrac{1}{\sqrt{\pi}} \sqrt{2}^{\,n}\Gamma(\tfrac{n+1}{2})$. Altogether, this shows that
$$\alpha'(x)= \frac{\det(M^TM)^\frac{1}{2} }{(1+x_N^2)^\frac{n}{2}}\,\frac{\Gamma\left(\frac{n+1}{2}\right)}{\sqrt{\pi}^{\,n+1}}.$$
Moreover, we have
\begin{align*}
M^TM &= W^TR^{-T}R^{-1}W\\
 &= W^T\operatorname{diag}(1,\dotsc,1,\tfrac{1}{1+x_N^2})W
 \\ &= \mathbf 1 - \tfrac{1}{1+x_N^2}W\operatorname{diag}(0,\dotsc,0,x_N^2)W.
\end{align*}
The second summand in the last expression is a rank-one matrix with the single non-zero eigenvalue $-\tfrac{||W^TUx||^2}{1+||Ux||^2}$. Taking determinants we get
\[
\det(M^TM) = 1 - \frac{||W^TUx||^2}{1+||Ux||^2}=
\frac{1 + ||Ux||^2 - ||W^TUx||^2}{1+||Ux||^2} =
\frac{1 + ||x||^2 -||\Pi_{\Tang{x}{\cM}}x||^2}{1+||x||^2},
\]
where $\Pi_{\Tang{x}{\cM}}$ denotes the orthogonal projection onto the tangent space. Since we have $||x||^2 -||\Pi_{\Tang{x}{\cM}}x||^2 = \langle x, \Pi_{\mathrm{N}_x \cM}\, x\rangle$, this implies
$$\alpha'(x)= \frac{\sqrt{1+ \langle x, \Pi_{\mathrm{N}_x \cM}\, x\rangle}}{(1+x_N^2)^\frac{n+1}{2}\,}\,\frac{\Gamma\left(\frac{n+1}{2}\right)}{\sqrt{\pi}^{\,n+1}} = \alpha(x).$$
This concludes the proof.
\end{proof}

We are now prepared to prove Theorem~\ref{theorem1}.
\begin{proof}[Proof of Theorem~\ref{theorem1}]
We first prove the first part. The support of~$\overline{f}(A,b)$ is a full dimensional subset of $\HR^{n\times N}\times \HR^n$ and it is contained in the complement of the set of all $(A,b)$ for which $\cM\cap\cL_{A,b}=\emptyset$. We let $\mathcal X$ denote the interior of the support of $\overline{f}(A,b)$, so that $\mean_{(A,b)\sim \varphi} \,\overline{f}(A,b) = \int_{\mathcal X} \overline{f}(A,b) \varphi(A,b)\d (A,b)$.

Let $\pi\colon \mathbb R^{n\times N} \times \mathcal M\to  \mathcal X$ be the map sending a pair $(A,x)$ to $(A,Ax)$. We have $\deriv{\pi}{(A,x)}(\dot A,\dot x)=(\dot A,\dot A x + A \dot x)$,
so the derivative of $\pi$ can be identified with the matrix~$\left(\begin{smallmatrix}\mathbf 1 & 0 \\ * & A\end{smallmatrix}\right)$. This shows that $\mathrm{NJ}(\pi,(A,x))=|\det(A|_{\Tang{x}{\cM}})|$. Therefore, by \cref{coarea},
$$\mean_{(A,b)\sim \varphi} \,\overline{f}(A,b)= \int_{\mathbb R^{n\times N} \times \mathcal M}\, \frac{f(x)}{\alpha(x)} \,|\det(A|_{\Tang{x}{\cM}})| \,\varphi(A,Ax)\; \d (A,x).$$
The projection $\mathbb R^{n\times N}\times \cM\to \cM$ on the second factor has normal Jacobian one everywhere. Applying \cref{coarea} again yields
$$\mean_{(A,b)\sim \varphi} \,\overline{f}(A,b)= \int_{\cM} \frac{f(x)}{\alpha(x)} \left(\int_{\mathbb R^{n\times N}}\,  \,|\det(A|_{\Tang{x}{\cM}})| \,\varphi(A,Ax)\; \d A \right)\d x =  \int_{\cM} f(x) \d x,$$
the second inequality by \cref{alpha_eq}. This proves the first part.

Now, we prove the second part, where we assume that $f\colon \mathcal M\to \mathbb R_{>0}$ is nonnegative. Recall that
$\psi(A,b) = \frac{\varphi(A,b)\overline f(A,b)}{\mean_{\varphi}(\overline f)}$. Since $\mean_{\varphi}(\overline f) = \int_{\cM} f(x) \d x$ is positive and finite by the first part of the theorem, we find that $\psi$ is a well defined probability density. The support of $\psi$ is contained in the closure of $\mathcal X$ and therefore $\cM\cap\cL_{A,b}$ is almost surely non-empty and finite.

Let $Y=(A,x)\in \mathbb R^{n\times N}\times \mathcal M$
be the random variable defined by first choosing $(A,b)\sim \psi$ and then taking $x\in \mathcal M \cap \mathcal L_{A,b}$ with probability $f(x)\alpha(x)^{-1}\overline f(A,b)^{-1}$. By construction, $\pi(Y)\sim \psi$. We use \cref{coarea-densities} (2) and find that $Y$ has density
\[
\beta(A,x)
= \frac{\psi(A,Ax)f(x)\mathrm{NJ}(\pi,(A,x))}
{\alpha(x)\overline f(A,Ax)}.
\]
Recall that  $\mathrm{NJ}(\pi,(A,x))=|\det(A|_{\Tang{x}{\cM}})|$ and that the projection $\mathbb R^{n\times N}\times \cM\to \cM$ on the second factor has normal Jacobian one everywhere. Therefore, by \cref{coarea-densities} (1), the random point $x\in \cM$ has density $\gamma$ with
\begin{align*}
\gamma(x) &= \int_{A\in \mathbb R^{n\times N}}\beta(A,x)\,\d A
\\  &= \frac{f(x)}{\alpha(x)}\int_{A\in \mathbb R^{n\times N}}
\frac{\psi(A,Ax)|\det(A|_{\Tang{x}{\cM}})|}{\overline f(A,Ax)}\,\d A
\\ &= \frac{f(x)}{\alpha(x)\mathbb E_\varphi(\overline f)}\int_{A\in \mathbb R^{n\times N}}
\varphi(A,Ax)|\det(A|_{\Tang{x}{\cM}})|\,\d A.
\end{align*}
Using \cref{alpha_eq} yields
$\gamma(x)= \frac{f(x)}{\mathbb E_\varphi(\overline f)}$. This finishes the proof of Theorem \ref{theorem1}.
\end{proof}

\section{Proof of Lemma \ref{lemma_rate_of_convergence}}\label{sec:proof_lemma_rate_of_convergence}
First, we can bound
$\alpha(x)\geq  \frac{1}{1+\sup_{x\in \mathcal M} \Vert x\Vert^2}\;\frac{\Gamma\left(\frac{n+1}{2}\right)}{\sqrt{\pi}^{\,n+1}}.$
Let $d$ be the degree of the ambient variety of $\mathcal M$. With probability one $\cM\cap \cL_{A,b}$ consists of at most $d$ points and so we have
\begin{align*}
\mean_{(A,b)\sim \varphi} \overline{f}(A,b)^2 &= \mean_{(A,b)\sim \varphi} \left(\sum_{x\in\mathcal M\cap \cL_{A,b}} \frac{f(x)}{\alpha(x)}\right)^2\\
&\leq  \mean_{(A,b)\sim \varphi} \left(\sum_{x\in\mathcal M\cap \cL_{A,b}}  \frac{\vert f(x)\vert}{\alpha(x)}\right)^2\\
&\leq  \mean_{(A,b)\sim \varphi} \left(\sum_{x\in\mathcal M\cap \cL_{A,b}}  \frac{\sup_{x\in\cM}\vert f(x)\vert}{\inf_{x\in\cM}\alpha(x)}\right)^2\\
&\leq d^2(1+\sup_{x\in \mathcal M} \Vert x\Vert^2)^{n+1} \frac{\pi^{n+1}}{\Gamma\left(\frac{n+1}{2}\right)^2}\, \sup_{x\in \mathcal M} f(x)^2.
\end{align*}
We also have $	\sigma^2(\overline f) \leq \mean_{(A,b)\sim \varphi} \overline{f}(A,b)^2$, and therefore
\begin{equation}\label{bound_sigma}
	\sigma^2(\overline f) \leq d^2(1+\sup_{x\in \mathcal M} \Vert x\Vert^2)^{n+1} \frac{\pi^{n+1}}{\Gamma\left(\frac{n+1}{2}\right)^2}\, \sup_{x\in \mathcal M} f(x)^2
\end{equation}
is finite. We may therefor use Chebyshev's inequality to deduce that
\begin{equation}\label{chebyshev}
\Prob\left\{\left\vert \mathrm{E}(f,k) -  \int_\mathcal{M}f(x)\,\d x \right\vert \geq \varepsilon \right\} \leq \frac{\sigma^2(\overline f)}{\varepsilon^2k}.
\end{equation}
This finishes the proof.\qed

\section{Sampling from projective manifolds}\label{sec:projective}

In this section we prove variations of Theorems~\ref{theorem1} for projective algebraic manifolds.

Real projective space $\HP^{N-1}$ from \cref{sec:prelim} is a compact Riemannian manifold with a canonical metric, the \emph{Fubini-Study} metric. Namely, let $p:\HR^{N}\backslash \{0\}\to \HP^{N-1}$ be the canonical projection. Restricted to the unit sphere $\HS^{N-1}$, the projection $p$ identifies antipodal points.
We define a subset $\cU\subset \HP^{N-1}$ to be open if and only if $\restr{p}{\HS^{N-1}}^{-1}(\cU)$ is open. This gives $\HP^{N-1}$ the structure of a differential manifold. The Riemannian structure on $\HP^{N-1}$ is defined as $\langle \dot a,\dot b\rangle := \langle \deriv{p}{x}^{-1}\dot a,\deriv{p}{x}^{-1}\dot b\rangle $ for $\dot a,\dot b\in\Tang{x}\HP^{N-1}$.
This metric is called the {Fubini-Study} metric, and it induces the \emph{standard measure} on $\HP^{N-1}$.

We say that $\cM$ is a \emph{projective algebraic manifold} if it is an open submanifold of the smooth part of a real projective variety $\cV\subset \HP^{N-1}$. We assume $\mathcal M$ to be $n$-dimensional, and consider a function $f\colon \mathcal M \to \mathbb R_{\geq 0}$ with a well-defined scaled probability density $f(x)/\int_\cM f(x)\d x$.
 For $A\in \HR^{n\times N}$, define the linear space $\cL_A=\{x\in \HP^{N-1}\mid Ax=0\}$ and write
\begin{equation*}
\overline f(A):=\sum_{x\in \cM\cap\cL_A} f(x).
\end{equation*}
In this section, we denote by $\varphi_\ell$ the density of the multivariate standard normal distribution on $\mathbb R^\ell$.
\begin{theorem}\label{theorem1_proj}In the notation introduced above:

(1) Let $f$ be an integrable function on $\mathcal M$. We have
\[
	\int_{\mathcal M} f(x)\,\d(x) = \mathrm{vol}(\HP^n)\, \mean_{A\sim \varphi_{n\times N}}\,\overline f(A).
\]

(2) Let $f$ be nonnegative and assume that the integral $\int_{\mathcal M} f(x)\,\d(x)$ is finite and nonzero. Let $X\in\mathcal M$ be the random variable obtained by choosing  $A\in \mathbb R^{n\times N}$ with probability
$\psi(A):=\frac{\varphi(A) \,\overline f(A)}{\mean_{A\sim \varphi_{n\times N}} \,\overline f(A)}$
and one of the finitely many points $X\in \mathcal M\cap \mathcal L_A$ with probability $f(x)/\overline f(A)$. Then $X$ is distributed accordding to the density $f(x)/\int_\cM f(x)\d x$.
\end{theorem}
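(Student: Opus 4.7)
The plan is to mirror the proof of Theorem~\ref{theorem1}, replacing the map $\pi(A,x)=(A,Ax)$ with the projection $\pi_1(A,x)=A$ restricted to the incidence correspondence. Specifically, I would work with the lifted incidence variety
\[
\widetilde{\cI} := \{ (A,\tilde x) \in \mathbb R^{n\times N} \times \widetilde{\cM} : A\tilde x = 0\},
\]
where $\widetilde{\cM}\subset \HS^{N-1}$ is the double cover of $\cM\subset \HP^{N-1}$. This is a smooth manifold of dimension $nN$ whenever the generic $A$ meets $\cM$ transversally. I denote by $\pi_1\colon \widetilde{\cI}\to\mathbb R^{n\times N}$ and $\pi_2\colon \widetilde{\cI}\to\widetilde{\cM}$ the two projections. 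Generically the fibers of $\pi_1$ are the double cover of $\cM\cap\cL_A$, and the fibers of $\pi_2$ are the linear subspaces $\{A : A\tilde x=0\}\subset \mathbb R^{n\times N}$ of dimension $n(N-1)$.

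The main technical lemma I would establish is that
\[
\frac{\mathrm{NJ}(\pi_1,(A,\tilde x))}{\mathrm{NJ}(\pi_2,(A,\tilde x))} \;=\; \bigl|\det(A|_{\mathrm{T}_{\tilde x}\widetilde{\cM}})\bigr|.
\]
To see this, I split $T_{(A,\tilde x)}\widetilde{\cI}$ orthogonally as $V_1 \oplus V_2$, where $V_2 = \{(\dot A,0) : \dot A\tilde x=0\}$ is the kernel of $D\pi_2$ and $V_1$ is a complement parameterized by $\dot x \in T_{\tilde x}\widetilde{\cM}$, with corresponding tangent vectors of the form $(-(A\dot x)\tilde x^T, \dot x)$. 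A direct calculation with Gram matrices shows that $V_1$ has Gram matrix $I+M$ with $M=(AE)^T(AE)$ (where $E$ contains an ONB of $T_{\tilde x}\widetilde{\cM}$), while its image under $D\pi_1$ has Gram matrix $M$, and its image under $D\pi_2$ has Gram matrix $I$. The claim follows since $\sqrt{\det M}=|\det(AE)|$. This is the step I expect to be the main obstacle, because the metric bookkeeping on $\widetilde{\cI}$ mimics the role of $\alpha(x)$ in the affine case and requires care.

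The second lemma I would prove is the projective analog of Lemma~\ref{alpha_eq}: for every $\tilde x \in \HS^{N-1}$,
\[
\int_{\{A\,:\,A\tilde x=0\}} \varphi_{n\times N}(A)\,\bigl|\det(A|_{\mathrm{T}_{\tilde x}\widetilde{\cM}})\bigr|\,\mathrm d A \;=\; \frac{\Gamma\bigl(\tfrac{n+1}{2}\bigr)}{\sqrt{\pi}^{\,n+1}} \;=\; \frac{1}{\mathrm{vol}(\HP^n)}.
\]
This is obtained by rotating coordinates so that $\tilde x = e_N$ (using orthogonal invariance of $\varphi$), which reduces the integrand to a Gaussian in $n\times(N-1)$ variables times $|\det B|$ for an $n\times n$ Gaussian block $B$; the Wishart computation from the proof of Lemma~\ref{alpha_eq} (namely, Theorem~3.2.15 of \cite{muirhead}) yields the stated constant, and the evaluation $\mathrm{vol}(\HP^n) = \sqrt{\pi}^{\,n+1}/\Gamma(\tfrac{n+1}{2})$ gives the identification with the projective volume.

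With these two lemmas in hand, part~(1) follows directly: writing $\mathbb E_A \overline f(A)$ as an integral over $\widetilde{\cI}$ via the coarea formula applied to $\pi_1$ (absorbing the factor $\tfrac12$ from the double cover), then reapplying coarea with $\pi_2$, and finally evaluating the inner fiber integral with the second lemma, collapses the expression to $\tfrac{1}{\mathrm{vol}(\HP^n)}\int_{\cM} f(x)\,\mathrm d x$. For part~(2), I would reuse the construction of the auxiliary random variable $Y=(A,X)\in\widetilde{\cI}$ and appeal to Corollary~\ref{coarea-densities}: part~(2) of the corollary applied to $\pi_1$ gives the density of $Y$ as $\beta(A,x)=\psi(A)\,f(x)\,\overline f(A)^{-1}\,\mathrm{NJ}(\pi_1)$, and part~(1) of the corollary applied to $\pi_2$ together with the two lemmas cancels the Jacobians and the $\mathrm{vol}(\HP^n)$ factor, leaving $\gamma(x)=f(x)/\int_{\cM}f$, as required.
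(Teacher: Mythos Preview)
Your proposal is correct and follows essentially the same route as the paper: the paper also sets up the incidence correspondence $\cI=\{(A,x):Ax=0\}$, applies the coarea formula to the two projections, identifies the ratio of normal Jacobians with $|\det(A|_{\Tang{x}{\cM}})|$, and then invokes precisely your ``second lemma'' (which appears in the paper as \cref{alpha_eq_proj}, proved by the same rotation-and-Wishart argument you sketch). The only cosmetic differences are that the paper works directly on $\cM\subset\HP^{N-1}$ rather than passing to the spherical double cover, and it quotes \cite[Sec.~13.2, Lemma~3]{BSS} for the normal-Jacobian ratio instead of carrying out your explicit Gram-matrix computation.
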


\begin{remark}
In \cite[Section 2.4]{2017arXiv171103420L} Lairez proved a similar theorem for the uniform distribution on complex projective varieties.
\end{remark}

Sampling $\cL_A$ with $A\sim \varphi_{n\times N}$ yields a special distribution on the Grassmannian $\mathrm{G}(N-n-1, \HP^{N-1})$. By \cite{Leichtweiss} there is unique orthogonally invariant probability measure $\nu$
on $\mathrm{G}(N-n-1, \HP^{N-1})$. Since the distribution of the kernel of a Gaussian $A$ is invariant under orthogonal transformations, the projective plane $\cL_A=\{x\in\HP^{N-1}:Ax=0\}$  has distribution~$\nu$.

Furthermore, setting $f = 1$ in \cref{theorem1_proj} gives the formula
$$\mathrm{vol}(\cM)=\mathrm{vol}(\HP^n)\,\mean_{A\sim\varphi_{n\times N}} \,\lvert\mathcal M\cap\mathcal L_A \rvert.$$
This is the \emph{kinematic formula for projective manifolds} from \cite[Theorem 3.8]{howard} in disguise.

Before we can prove \cref{theorem1_proj}, we have to prove an auxiliary lemma, similar to \cref{alpha_eq}.

\begin{lemma}\label{alpha_eq_proj}
For any $x\in\cM$ we have
$$\int_{A\in\HR^{n\times N}: Ax=0}  |\det(A|_{\Tang{x}{\cM}})|\,\varphi_{n\times N}(A)\;\d A = \frac{1}{\mathrm{vol}(\HP^n)}.$$
In particular, the integral is independent of $x$.
\end{lemma}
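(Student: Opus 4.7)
The plan is to reduce the integral, via orthogonal invariance of the Gaussian measure, to the computation of $\mean|\det G|$ for a standard Gaussian $n\times n$ matrix $G$ — essentially the same calculation that appears in the proof of \cref{alpha_eq} — and then to recognize the answer as $1/\mathrm{vol}(\HP^n)$. As a first step I would use the orthogonal invariance of $\varphi_{n\times N}$ under $A\mapsto AU^T$ for $U\in O(N)$ to replace $x$ by any other unit representative of its projective class; the subspace $\{A:Ax=0\}$, the density, and the value of $|\det(A|_{\Tang{x}{\cM}})|$ (after transporting $\Tang{x}{\cM}$ by $U$) are all preserved. So I may assume $x=e_N=(0,\dots,0,1)$, which in particular shows the integral is independent of $x$.

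With $x=e_N$, the constraint $Ax=0$ says precisely that the last column of $A$ vanishes, so I parametrize $\{A:Ax=0\}$ by $A=[A',0]$ with $A'\in\HR^{n\times(N-1)}$. The density factors as
\[
\varphi_{n\times N}([A',0])=\frac{1}{(2\pi)^{n/2}}\,\varphi_{n\times(N-1)}(A').
\]
For the determinant, the identification $\Tang{x}{\HP^{N-1}}\cong x^\perp$ places $\Tang{x}{\cM}$ inside $e_N^\perp=\HR^{N-1}\times\{0\}$; picking $W'\in\HR^{(N-1)\times n}$ whose columns are an orthonormal basis and setting $W=\left(\begin{smallmatrix}W'\\0\end{smallmatrix}\right)$, the restriction $A|_{\Tang{x}{\cM}}$ in these orthonormal bases is represented by $AW=A'W'$, so $|\det(A|_{\Tang{x}{\cM}})|=|\det(A'W')|$.

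Now I invoke orthogonal invariance a second time: because $W'^TW'=I_n$, the product $A'W'\in\HR^{n\times n}$ is itself a standard Gaussian matrix when $A'\sim\varphi_{n\times(N-1)}$. Hence
\[
\int_{A:Ax=0}|\det(A|_{\Tang{x}{\cM}})|\,\varphi_{n\times N}(A)\,\d A
=\frac{1}{(2\pi)^{n/2}}\,\mean_{G\sim\varphi_{n\times n}}|\det G|.
\]
The expectation on the right is computed exactly as in the proof of \cref{alpha_eq}: writing $|\det G|=\det(G^TG)^{1/2}$, the matrix $G^TG$ is Wishart with identity covariance, and \cite[Theorem 3.2.15]{muirhead} gives $\mean|\det G|=2^{n/2}\Gamma(\tfrac{n+1}{2})/\sqrt{\pi}$.

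Combining the two factors yields $\Gamma(\tfrac{n+1}{2})/\pi^{(n+1)/2}$, and since $\mathrm{vol}(\HP^n)=\tfrac12\mathrm{vol}(\HS^n)=\pi^{(n+1)/2}/\Gamma(\tfrac{n+1}{2})$, this is precisely $1/\mathrm{vol}(\HP^n)$. The only real subtlety, and the step I would be most careful about, is the third one: correctly identifying $\Tang{x}{\cM}$ with a subspace of $\HR^{N-1}\times\{0\}$ under the Fubini–Study identification $\Tang{x}{\HP^{N-1}}\cong x^\perp$, so that the matrix-theoretic determinant $|\det(A'W')|$ genuinely represents the intrinsic Jacobian $|\det(A|_{\Tang{x}{\cM}})|$; the rest is bookkeeping of factors of $2\pi$ together with a citation to the Wishart identity already used in \cref{alpha_eq}.
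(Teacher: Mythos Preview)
Your proposal is correct and follows essentially the same route as the paper's own proof: rotate $x$ to a standard basis vector (the paper uses $e_1$, you use $e_N$), observe that the constraint $Ax=0$ kills one column so that $A=[A',0]$ with $\varphi_{n\times N}(A)=(2\pi)^{-n/2}\varphi_{n\times(N-1)}(A')$, use $\Tang{x}{\cM}\perp x$ to see that the basis matrix $W$ has a zero last row so $AW=A'W'$ with $W'$ having orthonormal columns, and reduce to $(2\pi)^{-n/2}\,\mean_{G\sim\varphi_{n\times n}}|\det G|$ evaluated via the Wishart identity. The subtlety you flag---that $\Tang{x}{\cM}$ really lies in $x^\perp$ so the last row of $W$ vanishes---is exactly the point the paper isolates and justifies explicitly.
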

\begin{proof}
Let $\cH(x):=\cset{A\in\HR^{n\times N}}{Ax = 0}$. It is a linear subspace of $\HR^{n\times N}$ of codimension~$n$. Let $U\in\HR^{N\times n}$ be a matrix whose columns form an orthonormal basis for~$\Tang{x}{{\cM}}$, so that $\det(\restr{A}{\Tang{x}{{\cM}}}) = \det(AU)$. Furthermore, let $O\in \HR^{N\times N}$ be an orthogonal matrix with $Ox = e_1$, where $e_1=(1,0,\ldots,0)^T\in\HR^N$. Then, $\cH(e_1) O= \cH(x)$. Making a change of variables $A\mapsto AO$  we get
\begin{equation}\label{eqqqq}
	\int_{A\in\cH(x)}  |\det(A|_{\Tang{x}{\cM}})|\,\varphi_{n\times N}(A)\,\d A=
	 \int_{A\in\cH(e_1)}\vert\det(AOU)\vert\, \varphi_{n\times N}(AO)\;\d A.
\end{equation}
We have $\varphi_{n\times N}(AO)=\varphi_{n\times N}(A)$, because the Gaussian distribution is orthogonally invariant. Moreover, any $A\in\cH(e_1)$ is of the form $A=[0, A']$ with $A'\in\HR^{n\times (N-1)}$, and we have $\varphi_{n\times N}(A)=\frac{1}{\sqrt{2\pi}^{\, n}}\varphi_{n\times (N-1)}(A')$. Let us denote by $O'$ the lower $(N-1)\times n$ part of $OU$, so that $AOU = A'O'$.
It follows that \cref{eqqqq} is equal to
\begin{equation*}
	\frac{1}{\sqrt{2\pi}^{\,n}}\int_{A'\in\HR^{n\times  (N-1)}}  \,\vert\det(A'O')\vert\,\varphi_{n\times (N-1)}(A')\;\d A'.
\end{equation*}
We show that $O'$ has orthonormal columns: since ${\cM}\subset \HS^{N-1}$, the tangent space $\Tang{x}{{\cM}}$ is orthogonal to $x$, which implies $U^T x = 0.$ Furthermore, $e_1^TOU = (U^T O^T e_1)^T = (U^T x)^T$. It follows that the first row of $OU$ contains only zeros and so the columns of $O'$ must be pairwise orthogonal and of norm one. A standard Gaussian matrix multiplied with a matrix with orthonormal columns is also standard Gaussian, so we have
\[
\int_{A'\in\HR^{n\times  (N-1)}}  \,\vert\det(A'O')\vert\,\varphi_{n\times (N-1)}(A')\;\d A'
=
\int_{M\in \mathbb R^{n\times n}} | \det(M) | \varphi_{n\times n}(M)\,\d M.
\]
This implies
$$\int_{A\in\cH(x)} \varphi_{n\times N}(A) |\det(A|_{\Tang{x}{\cM}})|\,\d A = \frac{\mean_{M\sim  \varphi_{n\times n}} \,\vert\det(M)\vert}{\sqrt{2\pi}^{\,n}}.$$
Finally, we compute $\mathrm{vol}(\HP^n) = \frac{1}{2}\mathrm{vol}(\HS^n) = \frac{\sqrt{\pi}^{\, n+1}}{\Gamma(\frac{n+1}{2})}$, and by \cite[Theorem 3.2.15]{muirhead}, we have $\mean_{M\sim  \varphi_{n\times n}}\det(M^TM)^\frac{1}{2} = \tfrac{1}{\sqrt{\pi}} \sqrt{2}^{\,n}\Gamma(\tfrac{n+1}{2})$. This finishes the proof.
\end{proof}

\begin{proof}[Proof of Theorem \ref{theorem1_proj}]
We define
${\cI} \coloneqq \cset{(A,x) \in  \HR^{n\times N}\times \cM}{A x = 0}.$ It is an algebraic subvariety of $\HR^{n\times N}\times \cM$. One can show that $\cI$ is smooth, but for our purposes it suffices to integrate over the dense subset of $\cI$ that is obtained by removing potential singularities from $\cI$.

Let~$\pi_1$ and $\pi_2$ be the projections from $ \cI$ to $\HR^{n\times N}$ and $ \cM$, respectively. Applying \cref{coarea} first to $\pi_1$ and then to $\pi_2$ yields
$$\mean_{A\sim \varphi_{n\times N}}\,\overline f(A) = \int_{\cM}f(x)\left( \int_{A\in\pi_1(\pi_2^{-1}(x))} \varphi_{n\times N}(A) \frac{\mathrm{NJ}(\pi_2, (A,x))}{\mathrm{NJ}(\pi_1, (A,x))}\,\d A \right) \d x.$$
By \cite[Sec.~13.2, Lemma~3]{BSS}, the ratio of normal Jacobians in the integrand equals $|\det(A|_{\Tang{x}{\cM}})|$. We get
\begin{align*}
\mean_{A\sim \varphi_{n\times N}}\,\overline f(A) &= \int_{\cM}f(x)\left( \int_{A\in\pi_1(\pi_2^{1}(x))} \varphi_{n\times N}(A) |\det(A|_{\Tang{x}{\cM}})|\,\d A \right) \d x\\
&= \frac{1}{\mathrm{vol}(\HP^n)} \,\int_{\cM}f(x)\d x;
\end{align*}
the second equality by \cref{alpha_eq_proj}. This proves the first part.

Let now $Y\in  \cI$ be the random variable obtained by choosing $A \in \HR^{n\times N}$ with distribution $\psi(A) = \frac{\varphi(A) \,\overline f(A)}{\mean_{A\sim \varphi_{n\times N}} \,\overline f(A)}$ and, independently of $A$, a point $ x\in \cM\cap {\cL}_A$ with probability
$ f(x) \overline f(A)^{-1}$. Then, by construction, $X =\pi_2(Y)$. Let $\gamma$ be the density of $X$.
Applying the first part of \cref{coarea-densities} to $\pi_1$ and then the second part to $\pi_2$, we have
\begin{align*}
\gamma(x)
&=\int_{(A,x)\in \pi_2^{-1}(x)}
\frac{\psi(A) f(x)\mathrm{NJ}(\pi_2, (A,x))}{\overline f(A)\mathrm{NJ}(\pi_1, (A,x))}\,\d A \\
&=\frac{ f(x)}{\mathbb E_\varphi (\overline f)}
\int_{(A,x)\in \pi_2^{-1}(x)}
\varphi(A)\,|\det(A|_{\Tang{x}{\cM}})|\,\d A\\
&=\frac{ f(x)}{\mathbb E_\varphi (\overline f)} \frac{1}{\mathrm{vol}(\HP^n)}\\
&=\frac{ f(x)}{\int_\cM f(x)\,\d x};
\end{align*}
the last penultimate equality again by \cref{alpha_eq_proj}, and the last equality by the first part of the theorem. This finishes the proof.
\end{proof}

\section{Comparison with previous work}\label{sec:comparison}
We now briefly review the use of kinematic formulae in applications and compare our method to them.

In~\cite{graph-cuts}, the authors use a Crofton-type formula for curves to establish a link between a discrete \emph{cut metric} on a grid, which is an object in combinatorial optimization, and an Euclidean metric on $\mathbb R^2$. This is then applied to a problem in image segmentation.

In~\cite{crofton-discretized}, the authors use the Cauchy formula and Crofton formula to compute Minkowski measures (e.g.\ surface area, perimeter) of discrete binary 2D or 3D pictures given as a grid of white or black pixels. Since the picture is discrete, the set of lines is also appropriately discretized, as well as Crofton's formula itself. So the difference to our method here lies in the discretization. In \cite{discretized-improved}, a more efficient way to evaluate the discretized Crofton's formula is proposed, using run-length encoding.

In \cite{crofton-sphere}, the authors use Crofton's formula to approximate the volume of a body $\cM$. They use a different sampling method, which goes as follows:
(1) Find a compact body~$\cE$ containing $\cM$, of known volume $\mathrm{vol}(\cE)$, such that the space of lines intersecting $\cE$ is approximately the same as the space of lines intersecting $\cM$. For example, $\cE$ could be a sphere containing $\cM$.
(2) Sample uniformly from the set of lines that intersect $\cE$.
(3) Compute the total number of intersection points of all the sampled lines with $\cE$ (call it~$g$) and with $\cM$ (call it $h$).
(4) Approximate the volume of $\cM$ as $\frac{h}{g}\,\mathrm{vol}(\cE)$.

As the authors write in~\cite{crofton-sphere}, this method can only give an approximation for the volume of~$\cM$. Its accuracy depends on the choice of $\cE$, i.e.\ on how well the uniform distribution on the set of lines intersecting $\cE$ approximates the same with respect to~$\cM$. On the other hand, our method is guaranteed to converge to the true volume given enough samples. We tested both methods on the curve $\cM$ from \cref{eq1} as well as on the ellipse
$\cM_1 = \{x\in \mathbb R^2\mid (x/3)^2 + y^2 - 1 = 0\}$, choosing $\cE$ to be the centered circle of radius $3$. We plotted the results in \cref{fig100}.

\begin{figure}[ht]
  \begin{center}
\includegraphics[height = 5cm]{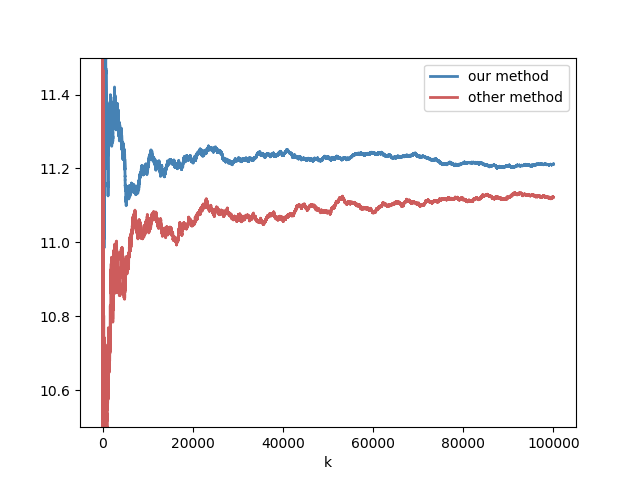}
\includegraphics[height = 5cm]{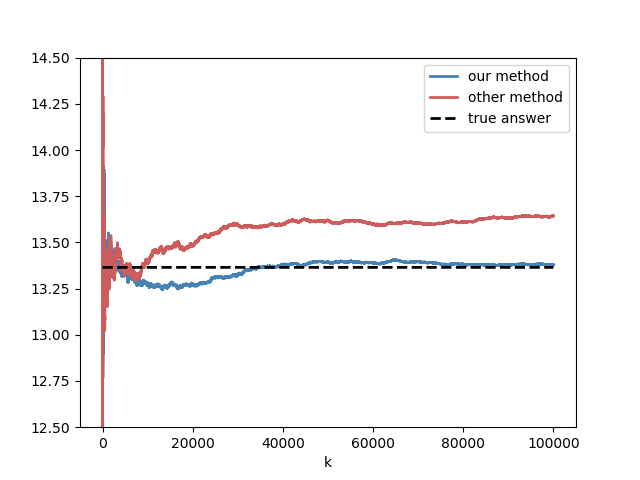}
\end{center}
\caption{The plot shows estimates for the volumes of two curves $\cM$ obtained from empirical estimates for $1\leq k\leq 10^5$ samples.
On the left, $\cM$ is the curve from \cref{eq1}. On the right, $\cM$ is the ellipse $\{x\in \mathbb R^2\mid (x/3)^2 + y^2 - 1 = 0\}$. Its volume is known and shown by the black line. The blue curve shows our method, and the red curve shows the method from~\cite{crofton-sphere}, where we have used the circle of radius $3$ for~$\cE$. Our method is guaranteed to converge to the true volume of $\cM$ for $k\to \infty$, while the other method is not, as exemplified by the plot. Our method seems to converge at least as quickly as the other method, if not slightly faster. \label{fig100}}
\end{figure}

Finally, we want to mention \cite{EM2016, Mangoubi2016}. In these works the authors derive MCMC methods for sampling $\mathcal M$ by intersecting it with random subspaces moving according to the \emph{kinematic measure} in $\mathbb R^N$. This is related to our discussion from the introduction, where we proposed sampling from $\psi(A,b)$ using MCMC methods. Taking this approach and comparing it to \cite{EM2016, Mangoubi2016} is left for future work as we discuss in the next section.

\section{Discussion}\label{sec:discussion}
We explained a new method to sample from a manifold $\mathcal M$ described by polynomial implicit equations. The implementation we described generates independent samples from the density $\psi(A,b)$ by rejection sampling, hence independent points $x\in \mathcal M$.
As we made experiments, we observed some downsides of our method. Namely, our method becomes slow when the degree of the variety is large in which case it is not easy to find a good $\kappa$, and the rejection rate in the sampling process becomes infeasibly large.

But we could also sample from $\psi(A,B)$ using an MCMC method with the goal of improving the rejection rate, at the cost of introducing dependencies between samples. In contrast to the known MCMC methods for nonlinear manifolds, our method would employ MCMC on a flat space. We name using MCMC methods for sampling $\psi(A,B)$ as a possible direction for future research.

\bibliographystyle{siamplain}
\bibliography{literature}
\end{document}